\documentclass[twoside, 12pt, reqno]{amsart}

\DeclareSymbolFont{symbols}{OMS}{cmsy}{m}{n}

\usepackage{fullpage}
\usepackage[utf8]{inputenc}
\usepackage[T1]{fontenc}

\usepackage{mathrsfs}
\usepackage{mathtools}
\mathtoolsset{showonlyrefs,showmanualtags}
\usepackage{amsfonts}
\usepackage{amsmath, amssymb, amsthm}
\usepackage{enumerate}
\usepackage{graphicx, tikz}
\usepackage{ifthen}
\usetikzlibrary{arrows, scopes}
\usetikzlibrary{calc, cd}
\usetikzlibrary{decorations.pathreplacing}
\usetikzlibrary{decorations.pathmorphing}
\usetikzlibrary{patterns}
\usetikzlibrary{fadings}
\usetikzlibrary{cd}
\usetikzlibrary{automata, positioning}
\usetikzlibrary{calc}
\usepackage{tikz-cd}
\usepackage{stmaryrd}
\usepackage{setspace}
\usepackage{hyperref}
\usepackage[alphabetic, initials]{amsrefs}
\usepackage{bbm}

\newtheorem{thm}{Theorem}[section]
\newtheorem{lem}[thm]{Lemma}

\newtheorem{prop}[thm]{Proposition}
\theoremstyle{definition}
\newtheorem{defi}[thm]{Definition}
\newtheorem{example}[thm]{Example}
\theoremstyle{remark}
\newtheorem{rmk}[thm]{Remark}

\newcommand{\Hil}{\mathcal{H}}
\newcommand{\slot}{{~\cdot~}}

\newcommand{\U}{{\mathsf{U}}}

\newcommand{\cC}{\mathcal{C}}

\newcommand{\K}{\mathcal{K}}

\newcommand{\cD}{\mathcal{D}}

\newcommand{\cK}{\mathcal{K}}
\newcommand{\A}{\mathcal{A}}

\newcommand{\RR}{\mathbb{R}}
\newcommand{\TT}{\mathbb{T}}

\newcommand{\CC}{\mathbb{C}}

\newcommand{\NN}{\mathbb{N}}

\DeclareMathOperator{\End}{End}

\DeclareMathOperator{\Rep}{Rep}

\DeclareMathOperator{\Hom}{Hom}

\DeclareMathOperator{\Mor}{Mor}

\DeclareMathOperator{\ev}{ev}

\DeclareMathOperator{\Meas}{Meas}

\DeclareMathOperator{\Ad}{Ad}

\DeclareMathOperator{\id}{id}

\DeclareMathOperator{\B}{B}

\DeclareMathOperator{\Aut}{Aut}
\DeclareMathOperator{\Amp}{Amp}

\newcommand{\dd}{\mathrm{d}}

\usepackage{xspace}

\DeclareRobustCommand{\ie}{i.e.\@\xspace}
\makeatletter
\DeclareRobustCommand{\etc}{%
    \@ifnextchar{.}%
        {etc}%
        {etc.\@\xspace}%
}
\makeatother

\def\u1net{{\A_\RR}}

\DeclareMathOperator{\Out}{\mathsf{Out}}

\newcommand{\LL}{\mathcal{L}}

\usepackage{enumitem}

\newcommand{\concrete}{\mathrm{concrete}}

\newcommand{\Borel}{\mathrm{Borel}}

\newcommand{\Bim}{\mathrm{Bim}}

\begin{document}
\date{\today}
\dateposted{\today}
\newcommand{\mytitle}{%
    Continuous categories of endomorphisms associated with $G$-kernels}
\title{\mytitle}
\author{Marcel Bischoff}
\email{mrclbschff@gmail.com}
\author{Pradyut Karmakar}
\address{Sam Houston State University, 332 G LDB, 1900 Avenue I, Huntsville, TX 77340}
\email{karmakar.pradyut@gmail.com}
\dedicatory{This article is dedicated to the memory of the second author's mother}

\begin{abstract}
    We generalize the construction of tensor categories of endomorphisms of a type III factor $M$ associated with a $G$-kernel, from the case of a discrete group $G$ to that of a compact second countable group. Our approach is based on the construction of a unitary tensor functor from a category of $C(G)$-modules to the category of endomorphisms of $M$. This functor maps a $C(G)$-module, realized as the space of square-integrable functions on a measure space, to a continuous family of endomorphisms of $M$. The resulting structure is a continuous category of endomorphisms, which provides a new framework for studying the interplay between subfactor theory and the representation theory of continuous groups.
\end{abstract}

\maketitle

\setcounter{tocdepth}{3}
\tableofcontents

\section{Introduction}
\label{sec:Introduction} 
The direct sum theory of sectors associated with a $G$-kernel for a finite group has been examined in the work of Izumi in \cite{Iz2015}.
However, the literature lacks a formal notion of the \emph{direct integral theory of sectors}. This article introduces a construction that develops the direct integral theory for sectors associated with a $G$-kernel, where $G$ denotes a compact second countable group.
The main goal of this paper is to generalize a well-known construction of endomorphisms of a type III factor $M$ from discrete groups to the case of a compact second countable group $G$.

Let us first recall the discrete case. Let $G$ be a discrete group. A $G$-kernel on a type III factor $M$ is a homomorphism $\theta \colon G \to \Out(M)$ with a certain obstruction $[\omega]\in H^3_{\mathrm{Borel}}(G, \mathrm{U}(1))$; we refer readers to \cite{Jo1980}, \cite{Su1980-II} for the definition of $G$-kernels. More precisely, the following is well-known to experts.
\begin{prop}
    Let $G$ be a group and $\theta \colon G \to \Out(M)$ be a $G$-kernel on a type III
    factor $M$ with obstruction $[\omega]\in H^3(G, \mathrm{U}(1))$.

    Then there is a fully faithful dagger-unitary tensor functor $F$ from the category of
    finite-dimensional $G$-graded Hilbert spaces twisted by $\omega$, namely $\mathrm{Hilb}^{\omega}_{G}$, to
    the category $\End(M)$ with $[F(\mathbb{C}^g)] = \theta_g$.
\end{prop}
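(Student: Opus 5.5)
Pick, for each $g\in G$, an automorphism $\alpha_g\in\Aut(M)$ representing $\theta_g$, with $\alpha_e=\id$. Since $\theta$ is a homomorphism into $\Out(M)$, $\alpha_g\alpha_h$ and $\alpha_{gh}$ differ by an inner automorphism. Choose unitaries $u(g,h)\in M$ with
\begin{equation}
\alpha_g\alpha_h=\Ad u(g,h)\circ\alpha_{gh},
\end{equation}
normalized so that $u(e,g)=u(g,e)=1$.

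Next we compare the two ways of writing $\alpha_g\alpha_h\alpha_k$ as $\Ad(\cdot)\circ\alpha_{ghk}$. They show that
\begin{equation}
\alpha_g(u(h,k))\,u(g,hk)\,\bigl(u(g,h)\,u(gh,k)\bigr)^*
\end{equation}
lies in the commutant of $\alpha_{ghk}(M)=M$, hence in $M'\cap M=\CC$ because $M$ is a factor. Call this scalar $\omega(g,h,k)\in\U(1)$. This is the standard construction of the obstruction cocycle: $\omega$ is a $3$-cocycle, and its class does not depend on the choices of $\alpha$ and $u$. By hypothesis this class is $[\omega]$. Replacing $u$ by $u\cdot c$ for a suitable $2$-cochain $c$, we may assume the scalar equals exactly the fixed representative $\omega$ used to define $\mathrm{Hilb}^\omega_G$.

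We now define $F$ on simple objects by $F(\CC^g)=\alpha_g$. Morphisms of $\mathrm{Hilb}^\omega_G$ between simple objects are $\CC$ for equal grades and $0$ otherwise. For $g\neq h$, $\Hom(\alpha_g,\alpha_h)=0$: an intertwiner $t$ gives $t\alpha_g(x)=\alpha_h(x)t$, so $t^*t\in\alpha_g(M)'\cap M=\CC$ and $t$ is a multiple of a unitary, which would make $\theta_g=\theta_h$. Also $\Hom(\alpha_g,\alpha_g)=M'\cap M=\CC$. Because $M$ is type III, $\End(M)$ admits finite direct sums via Cuntz isometries. We therefore extend $F$ additively: a graded space $\bigoplus_g V_g$ is sent to $\sum_{g,i}\Ad(s_{g,i})\circ\alpha_g$, with $s_{g,i}$ orthonormal isometries in $M$ indexed by bases of the $V_g$. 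Morphisms, which are block matrices, go to the corresponding combinations of $s_{g,i}s_{h,j}^*$. This gives a fully faithful dagger functor, with the dagger preserved since adjoints go to adjoints.

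For the tensor structure, take $\mu_{g,h}\colon F(\CC^g)\circ F(\CC^h)\to F(\CC^{gh})$ to be $u(g,h)^*$, or $u(g,h)$ depending on convention. This is a unitary intertwiner $\alpha_g\alpha_h\to\alpha_{gh}$. The hexagon/associativity condition for $(F,\mu)$ reads $\mu_{gh,k}\,\mu_{g,h}\,F(\CC^g)(\mu_{h,k})^*\cdots$, and this is exactly the cocycle identity above with twist $\omega(g,h,k)$. So it matches the associator of $\mathrm{Hilb}^\omega_G$, while $\End(M)$ has a strict associator. The unitors hold by the normalization. We then extend $\mu$ to direct sums using the Cuntz isometries, and naturality is routine.

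The main obstacle is the bookkeeping: matching conventions so that the scalar obtained from $u$ is precisely $\omega$ and not $\bar\omega$ or $\omega^{-1}$, and adjusting $u$ by a coboundary so that it equals the given representative rather than only a cohomologous one. I would handle this first on simple objects, where everything reduces to the scalar identity above, and only then extend to sums.
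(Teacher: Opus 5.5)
Your proposal is correct and matches the construction the paper indicates; the paper gives no detailed proof of this discrete statement, calling it well known to experts. The shared ingredients are $F(\CC^n_g)=\sum_i \Ad v_i\circ\alpha_{g(i)}$ with Cuntz isometries $v_i$, and a tensorator built from the $u(g,h)^*$, which is the discrete specialization of the paper's $W_{X,Y}$, with associativity reducing to the defining identity of $\omega$. A few small points should be stated explicitly. The vanishing $\Hom(\alpha_g,\alpha_h)=0$ for $g\neq h$ uses injectivity of $\theta$, which the paper builds into its definition of a $G$-kernel. It also needs $tt^*\in\CC$ in addition to $t^*t\in\CC$. Finally, the condition you check is the monoidal-functor associativity diagram, not a ``hexagon''.
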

Let $\mathbb{C}^n_g$ be the $G$-graded Hilbert space with grading given by a function
$g\colon I_n \to G$.
Then $F(\mathbb{C}^n_g) =\sum_{i=1}^n \Ad v_i\circ \alpha_{g(i)}$, where
$\{v_i\in M\}_{i\in I_n}$ are
generators of the Cuntz algebra $\mathcal{O}_n$.

The goal of this paper is to generalize this construction to the continuous case
where $G$ is a compact Hausdorff second countable group.

To facilitate this generalization, we first re-examine some fundamental constructions.
Denote $I_n= \{1, \ldots, n\}$ and $I_\infty =\NN$.
\begin{prop}
    \label{prop:UnitaryCuntz}
    There is a one-to-one correspondence between right $M$-modular unitary maps 
    $U\colon L^2M_M\otimes \ell^2I_n\to L^2M_M$ and 
    $\{v_i\in M\}_{i\in I_n}$ representations of the Cuntz algebra 
    $\mathcal{O}_n$
    in $M$.
\end{prop}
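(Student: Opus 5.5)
We identify $L^2M\otimes \ell^2 I_n$ with the orthogonal direct sum $\bigoplus_{i\in I_n} L^2M$ of right $M$-modules, with standard inclusions $\iota_i\colon \xi\mapsto \xi\otimes\delta_i$. The basic input is the well-known fact that the right $M$-modular bounded maps $L^2M_M\to L^2M_M$ are exactly left multiplication by elements of $M$, i.e. $\mathrm{End}(L^2M_M)=M'' $ acting from the left (the commutant of the right action $JMJ$ is $M$).

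Given a right $M$-modular unitary $U$, set $U_i:=U\circ\iota_i\colon L^2M\to L^2M$. Each $U_i$ is right modular, so $U_i=\lambda(v_i)$ for a unique $v_i\in M$. The adjoint $U^*$ is also right modular, and $\iota_i^*U^*U\iota_j=\iota_i^*\iota_j=\delta_{ij}$ gives $v_i^*v_j=\delta_{ij}1$. Moreover $UU^*=1$ together with $\sum_i\iota_i\iota_i^*=1$ (strongly convergent when $n=\infty$) yields $\sum_i v_iv_i^*=1$, the sum converging strongly. Hence $\{v_i\}$ satisfies the Cuntz relations, which for $n=\infty$ we read in the usual von Neumann sense: $v_i^*v_j=\delta_{ij}$ and $\sum_i v_iv_i^*=1$ strongly. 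This strong convergence is the convention needed for the bijection. For the converse, given such $\{v_i\}$, define $U(\sum_i \xi_i\otimes\delta_i):=\sum_i v_i\xi_i$. The ranges $v_iL^2M$ are mutually orthogonal, so the series converges and $U$ is an isometry. Its range contains $\sum_i v_iv_i^*\eta=\eta$ for every $\eta$, so $U$ is unitary. It is right modular because left and right actions commute. The two constructions are inverse to each other: $U\iota_i=\lambda(v_i)$ recovers $v_i$, and conversely $U$ is determined by its restrictions $U\iota_i$ because the $\iota_i$ have dense total span.

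The only real subtlety is the case $n=\infty$. There one has to check that the convergence of $\sum_i v_iv_i^*$ matches surjectivity of $U$, and that the correspondence is with representations of $\mathcal O_\infty$ satisfying $\sum v_iv_i^*=1$ strongly. The algebraic $\mathcal O_\infty$ relations alone only give a partial sum $\le 1$. I expect the statement intends this strong version, possibly via the type III assumption, and I would flag this convention explicitly. Everything else is the commutant theorem $(JMJ)'=M$.
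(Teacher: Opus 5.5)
Your proposal is correct and follows essentially the same route as the paper. The paper also sets $v_i = U(\id_{L^2M_M}\otimes|e_i\rangle)r_{L^2M_M}^{\ast}$ (your $U\iota_i$), shows $v_i\in M$ via commutation with $M'$, reads off $v_i^{\ast}v_j=\delta_{ij}$ and $\sum_i v_iv_i^{\ast}=1$, and conversely takes $U=r_{L^2M_M}\sum_i v_i\otimes\langle e_i|$, checking $U^{\ast}U=1=UU^{\ast}$ directly rather than by your isometry-plus-range argument; your strong reading of $\sum_i v_iv_i^{\ast}=1$ for $n=\infty$ is exactly the convention the paper uses implicitly (type III plays no role in it), and your check that the two constructions are mutually inverse is a step the paper leaves unstated.
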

The following corresponds to the action of an endomorphism of $M$ on 
a Cuntz algebra realized in $M$.
\begin{prop}
    Let $\rho\in \End(M)$. 
    Consider the W${}^*$-category $\Amp(M)$
    with objects $L^2M_M$, $L^2M_M\otimes \cK$ where $\cK$ is a separable 
    Hilbert space and morphisms are bounded right $M$-modular maps
    denoted by $\Hom_{-M}(\slot, \slot)$.

    Then there is a unitary endofunctor on $\operatorname{Amp}(M)$
    acting trivially on objects and on a morphism
    $m\otimes t\in \Hom_{-M}(L^2M_M\otimes \Hil, L^2M_M \otimes \cK)$
    for some $m\in M$ and $t\in \B(\Hil, \cK)$ by
    $$
    {}^\rho(m\otimes t) =\rho(m) \otimes t ,\qquad \text{where } m\otimes t \text{ is a simple tensor in $M \otimes \B(\Hil, \cK)$.}
    $$
\end{prop}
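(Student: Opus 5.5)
We identify $\Hom_{-M}(L^2M_M\otimes \Hil, L^2M_M\otimes \cK)$ with the weak$^*$-closed space $M\bar\otimes \B(\Hil,\cK)$, and define ${}^\rho$ as the normal map $\rho\otimes\id$ on it. The first step is to fix this identification. Right $M$-modular bounded maps $L^2M\otimes\Hil\to L^2M\otimes \cK$ commute with the right action of $M$, i.e.\ with $JMJ\otimes 1$. Since $(JMJ)'=M$, the commutant computation gives
\[
\Hom_{-M}(L^2M_M\otimes \Hil, L^2M_M\otimes\cK)=M\bar\otimes \B(\Hil,\cK).
\]
Concretely, after choosing orthonormal bases $(e_i)$ of $\Hil$ and $(f_j)$ of $\cK$, such a map is a bounded matrix $(m_{ji})$ with entries $m_{ji}\in M$. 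Simple tensors $m\otimes t$ span a weak$^*$-dense subspace, so ${}^\rho$ is determined by the prescribed formula once we know it extends normally.

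The second step is to construct the extension. A type III factor acting on separable $L^2M$ is properly infinite, and $\Hil$, $\cK$ are separable. Hence we can embed everything in the single von Neumann algebra $N=M\bar\otimes \B(\ell^2\NN)$, viewing $\B(\Hil,\cK)$ as corners via isometries $\Hil,\cK\hookrightarrow \ell^2\NN$. The endomorphism $\rho$ of a factor is normal and unital (this is the standing convention for $\End(M)$). Therefore $\rho\otimes\id_{\B(\ell^2\NN)}$ exists as a normal unital $*$-endomorphism of $N$, by the standard tensor-product construction for normal maps. It fixes $1\otimes\B(\ell^2\NN)$ pointwise, so it preserves each corner $(1\otimes q_\cK)N(1\otimes q_\Hil)$. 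Restricting to corners defines ${}^\rho$ on each Hom-space, and it agrees with $\rho(m)\otimes t$ on simple tensors.

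Alternatively, we can avoid the amplification entirely. Using Proposition~\ref{prop:UnitaryCuntz}, pick Cuntz isometries $v_i$ and $w_j$ in $M$ realizing unitaries $L^2M\otimes\Hil\cong L^2M$ and $L^2M\otimes\cK\cong L^2M$. A morphism $x$ then corresponds to $\sum_{j,i} w_j x_{ji} v_i^*\in M$. In this picture we apply $\rho$ directly and transport back using the images $\rho(v_i)$, $\rho(w_j)$, which again form Cuntz families. We regard the amplification route as cleaner.

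The third step is to verify the functor axioms. The map $\rho\otimes\id$ is a normal $*$-homomorphism fixing $1\otimes \B(\ell^2\NN)$. Functoriality ${}^\rho(xy)={}^\rho(x){}^\rho(y)$ is therefore multiplicativity, where composition of morphisms is the product in $N$ cut down by corners. The identities $1\otimes 1_\Hil$ are preserved, and the dagger is preserved: ${}^\rho(x^*)={}^\rho(x)^*$. Linearity and normality on Hom-spaces are also immediate, so ${}^\rho$ is a unitary endofunctor acting trivially on objects. It also sends unitaries to unitaries.

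We expect the main obstacle to be well-definedness: showing the formula on simple tensors extends consistently and boundedly, rather than merely on the algebraic tensor product. The normality of $\rho$ together with the existence of the normal tensor product $\rho\otimes\id$ handles this. If one prefers an explicit argument, the matrix-entry description ${}^\rho((m_{ji}))=(\rho(m_{ji}))$ gives the same map. Boundedness of that matrix then follows by conjugating with the Cuntz-family unitaries above, which reduces it to $\|\rho(y)\|=\|y\|$ for the injective $*$-homomorphism $\rho$. Injectivity holds because $M$ is a factor.
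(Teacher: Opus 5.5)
Your proposal is correct and follows essentially the paper's route. Like the paper, you define ${}^\rho$ as the normal extension of $\rho\otimes\id$ on an amplification $M\bar\otimes\B(\cdot)$, realize $\Hom_{-M}(L^2M_M\otimes\Hil, L^2M_M\otimes\K)$ as a corner $(1\otimes p_\K)\,\cdot\,(1\otimes p_\Hil)$ via the isometries $1\otimes V$, and get functoriality and $*$-preservation from the multiplicativity of $\rho\otimes\id$. The only difference is bookkeeping: you work in one universal amplification $M\bar\otimes\B(\ell^2\NN)$ for all objects at once, whereas the paper works pairwise in $\End_{-M}(L^2M_M\otimes(\Hil\oplus\K))$ and uses Roberts' $3\times 3$ matrix trick on $\Hil\oplus\K\oplus\LL$ to handle composition.
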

\begin{rmk}
    Here, $\Hom_{-M}(\mathcal{A}_M, \mathcal{B}_M)$ denotes the space of right $M$-modular maps from $\mathcal{A}_M$ to $\mathcal{B}_M$.
\end{rmk}
The action ${}^\rho(\slot)$ generalizes endomorphisms on Cuntz algebras in the following
sense.
Let $(v_i)_i$ be a family of generators of a Cuntz algebra in $M$ and let $U_{(v_i)_i}$ be the associated
unitary map from Proposition~\ref{prop:UnitaryCuntz} above.
Then ${}^\rho U_{(v_i)_i} = U_{(\rho(v_i))_i}$.

We now move to the continuous case. Let $G$ be a compact second countable group. We consider the W${}^*$-category $\Rep(C(G))$ of $C(G)$-modules that are separable Hilbert spaces.
Let $X=(X, \mu_X, g_X)$ be a triple consisting of a topological space $X$, a finite measure $\mu_X$, and a Borel map $g_X\colon X\to G$.

Then $L^2X$ becomes a $C(G)$-module via the action $f.\xi = M_{f\circ g_X}\xi$, where $f \in C(G)$ and $\xi \in L^2X$.

Consider the projection-valued measure $E_X=E_{\mu_X}$ characterized by
$$M_{f\circ g_X} = \int_X f(g_X(x)) \,\dd E_X(x)$$ for $f\in C(G)$.
Define 
\begin{equation}
    \sigma_X(m) = \int_X \alpha_{g_X(x)}(m)\otimes \dd E_{X}(x)\,.
\end{equation}
Then $\sigma_X\in \Mor(M, M \otimes \B(L^2X))$. 
Because $M$ is type III we can choose a unitary 
$U_X\in \U_{-M}(L^2M_M\otimes L^2X , L^2M_M)$ (unitary right $M$-modular map) and define 
$\rho_X = \Ad U_X \circ \sigma_X \in \End(M)$.

This construction gives us a functor from a category of measure spaces to the category of endomorphisms of $M$.
Recall we call a $C(G)$-module concrete if it is of the form $L^2X$ for a triple
$(X, \mu_X, g_X)$.
\begin{prop}
    \label{prop:UnitaryFunctor}
    The assignment $X \mapsto \rho_X$ defines a unitary functor 
    from the full subcategory of concrete $C(G)$-modules (of the form $L^2X$) 
    to the category $\End(M)$.
\end{prop}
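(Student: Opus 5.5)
We first fix how the functor acts on morphisms. A morphism between concrete modules $L^2X$ and $L^2Y$ is a bounded $C(G)$-linear map $T\colon L^2X\to L^2Y$, meaning $T M_{f\circ g_X} = M_{f\circ g_Y} T$ for all $f\in C(G)$. We set
\begin{equation}
    F(T) = U_Y (1\otimes T) U_X^* \in M,
\end{equation}
where $1\otimes T$ acts on $L^2M\otimes L^2X$. Since $U_X,U_Y$ are right $M$-modular, $F(T)$ commutes with the right action of $M$ on $L^2M$. Hence it lies in $M$ (using $M=(M')'$ and $M'=JMJ$), and the claim is that $F(T)\in\Hom(\rho_X,\rho_Y)$.

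Functoriality and unitarity are formal. Because $U_X$ is unitary, $F(\id)=1$ and $F(ST)=F(S)F(T)$. Moreover $F(T^*) = F(T)^*$, so $F$ is a $*$-functor, and it sends unitaries to unitaries. Linearity and boundedness are also clear. So the only real content is the intertwining property
\begin{equation}
    F(T)\rho_X(m) = \rho_Y(m)F(T), \qquad m\in M.
\end{equation}
Unwinding $\rho_X=\Ad U_X\circ\sigma_X$, this reduces to the identity
\begin{equation}
    (1\otimes T)\,\sigma_X(m) = \sigma_Y(m)\,(1\otimes T)
\end{equation}
in $\B(L^2M\otimes L^2X, L^2M\otimes L^2Y)$.

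This identity is the main step, and I would prove it as follows. The $C(G)$-linearity of $T$ extends from $C(G)$ to bounded Borel functions $h$ on $G$: $T M_{h\circ g_X} = M_{h\circ g_Y}T$. To see this, note that the maps $h\mapsto M_{h\circ g_X}$ and $h\mapsto M_{h\circ g_Y}$ are normal representations of $L^\infty(G,\nu)$ for a measure $\nu$ dominating both pushforwards $(g_X)_*\mu_X$ and $(g_Y)_*\mu_Y$. The set of $h$ for which the identity holds is closed under bounded pointwise limits, and $C(G)$ is $\sigma$-weakly dense in $L^\infty$. In other words, $T$ intertwines the pushed-forward spectral measures, $T\,E_X(g_X^{-1}(B)) = E_Y(g_Y^{-1}(B))\,T$ for Borel $B\subseteq G$.

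Next we rewrite $\sigma_X$ as an integral over $G$ rather than over $X$:
\begin{equation}
    \sigma_X(m)=\int_G \alpha_g(m)\otimes \dd (E_X\circ g_X^{-1})(g).
\end{equation}
We pair this against vectors $\xi\otimes\eta$. The function $g\mapsto\langle\alpha_g(m)\xi',\xi\rangle$ is Borel and bounded; here we use a Borel choice of the lift $g\mapsto\alpha_g$ of $\theta$, which the definition of $\sigma_X$ already presupposes. With this, the intertwining identity follows from the extended linearity applied to that function. Equivalently, one approximates by simple functions: partition $G$ into small Borel sets $B_k$ with points $g_k\in B_k$ and compare with $\sum_k \alpha_{g_k}(m)\otimes E_X(g_X^{-1}B_k)$, for which the identity is immediate. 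The weak limit then passes through.

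The main obstacle is the measurability and convergence bookkeeping in this last step, in particular making sense of the operator-valued integral with $M$-valued integrand. I would handle it weakly on elementary tensors and then extend by boundedness, since $\|\sigma_X(m)\|\le\|m\|$. Finally, well-definedness up to the choice of $U_X$: a different choice changes $F$ by a unitary natural isomorphism, which does not affect the statement.
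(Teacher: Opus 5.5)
Your proof is correct and follows the paper's route. The paper defines $F_t=U_{X_2}(1_M\otimes t)U_{X_1}^\ast$ and calls the rest straightforward (functoriality and faithfulness are checked later in Theorem~\ref{Fullness}); it never verifies the intertwining identity $(1\otimes T)\sigma_X(m)=\sigma_Y(m)(1\otimes T)$, which you derive correctly by extending the $C(G)$-equivariance of $T$ to bounded Borel functions and pairing against elementary tensors. You should drop your ``equivalently'' Riemann-sum remark over small sets $B_k\subseteq G$: for a merely Borel lift $g\mapsto\alpha_g(m)$ those sums need not converge, whereas your matrix-coefficient argument needs no such approximation.
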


\begin{proof}
    We first need to define the morphisms in the source category. Let $X_1$ and $X_2$ be two objects.
    A morphism $t\colon X_1 \to X_2$ is given by a bounded $C(G)$-equivariant
    linear map $t\colon L^2X_1 \to L^2X_2$.

    The functor maps this morphism to an intertwiner $F_t$ such that $F_t \rho_{X_1}(m) = \rho_{X_2}(m) F_t$
    for all $m \in M$, given by $F_t =U_{X_2}(1_M \otimes t) U_{X_1}^\ast$.

    It is straightforward to check that this defines a unitary functor.
\end{proof}
\begin{rmk}\label{uniqueness}
Given a fixed $G$-kernel $(M, \theta)$, it is worth noting that our functor is unique up to natural transformation and does not depend on the choice of unitaries.
\end{rmk}

The structure of the paper is as follows. Section ~\ref{sec:preliminaries} provides background on W${}^{\ast}$ and C${}^{\ast}$-tensor categories, recalls the definition of the $G$-kernel, and discusses the topological structure on morphisms in the category $\Amp(M)$. Section~\ref{sec:Tensoriality} summarizes how manipulation of the integral equation derived from the $G$-kernel establishes the tensoriality of the functor; further details are provided in the Appendix. Section~\ref{sec:Cuntz} demonstrates that the equivalence of Cuntz algebras within the type III factor $M$ corresponds to absorbing unitaries, specifically that the right $M$ modular unitaries absorb when $M$ is amplified by bounded operators on a separable Hilbert space. Section~\ref{sec:Cuntz functor} introduces the functor acting on absorbing unitaries, which is extensively used in the Appendix section to demonstrate that the Pentagram diagram commutes. The construction of this endofunctor is expected, as it facilitates the composition of endomorphisms. In Section~\ref{sec:minimal}, we introduce the notion of minimal $G$-kernels and show that if the faithful functor $F\colon \Rep_{\concrete}(C(G)) \to \End(M)$ is full, then the $G$-kernel is necessarily minimal. In section~\ref{sec:Tensor structure}, we give the tensor structure on the category of endomorphisms under the image of the functor $F$ and construct the associator using the $G$-kernel.
\section{Acknowledgment}
The second author expresses sincere thanks to Yasuyuki Kawahigashi for many helpful discussions, and to Luca Giorgetti for carefully providing feedback on an earlier draft.

\section{Preliminaries}
\label{sec:preliminaries}
Throughout the article, we regard $M \subseteq \B(L^2(M))$, where $L^2(M)$ is the canonical Haagerup $L^2$ space \cite{Ha1975}.
We denote \emph{monoidal categories} by  $(\cC,\otimes,\mathbf{1},a,\iota)$,  where $\cC$ is a category, $\otimes \colon \cC \times \cC \to \cC$ is the tensor product bifunctor, $a$ is the associativity isomorphism, $\mathbf{1}$ is the unit object of $\cC$, and $\iota \colon \mathbf{1} \otimes \mathbf{1} \to \mathbf{1}$ is the unit isomorphism.
 These data satisfy the pentagon and the unit axioms (see \cite[Definition~2.1.1]{EtGeNiOs2015}).
We denote the morphism space by $\Hom(X, Y)$ for any pair of objects $X, Y \in \cC$ and $\End(X):=\Hom(X,X)$ for a single object $X$.
 
In order to build the connection between $\End(M)$ and a subcategory of $\Rep(C(X))$ for some compact Hausdorff space $X$, we need analytic structure in addition to monoidal structure, and W${}^{\ast}$-tensor categories provide exactly this.

We record some basic details about W${}^{\ast}$-categories here.
The definitions are standard and more details can be found in \cite{EtGeNiOs2015}, \cite{GhLiRo1985} and \cite{HeNiPe2024}.

\subsection{W*-categories}
To define a W${}^{\ast}$-category, we begin by recalling the definition of a C${}^{\ast}$-category.
Let $\cC$ be a category with objects $X,Y,\ldots$.  
For any two objects $X,Y$, write $\Hom(X,Y)$ for the set of morphisms from $X$ to $Y$.  
The category $\cC$ is called a \emph{C${}^{\ast}$-category} if the following axioms hold:

\begin{enumerate}
    \item $\Hom(X,Y)$ is a Banach space and composition
    \begin{align*}
    \Hom(Y,Z)\times \Hom(X,Y)\to \Hom(X,Z)
    \end{align*}
is bilinear.

    \item For all $y\in \Hom(Y,Z)$ and $x\in \Hom(X,Y)$, the norm inequality holds, namely
    \begin{align*}
    \|yx\| \leq \|y\|\|x\|\,.
    \end{align*}
    \item $\cC$ is a $*$-category, \ie, for each pair of objects $X, Y$ is equipped with a map  
   \begin{align*}
    x\in \Hom(X,Y)\mapsto x^{*}\in \Hom(Y,X)
  \end{align*}
    satisfying
    \begin{enumerate}
        \item $(cx+y)^{*}=\overline{c}\,x^{*}+y^{*}$,
        \item $x^{**}=x$,
        \item $(xy)^{*}=y^{*}x^{*}$.
    \end{enumerate}

    \item The $C^{*}$-identity holds:
   \begin{align*}
    \|x^{*}x\|=\|x\|^{2}\qquad\text{for all}\,x\in \Hom(X,Y).
  \end{align*}
   In particular, $\Hom(X,X)$ is a $C^{*}$-algebra.
\item For every $x\in \Hom(X,Y)$, the element $x^{*}x$ is positive in the
    $C^{\ast}$-algebra $\Hom(X,X)$.
\end{enumerate}

Now let $\cC$ and $\cD$ be two C${}^{\ast}$-categories.  
A functor $F \colon \cC\to\cD$ is called a C${}^{\ast}$-functor if:
\begin{enumerate}
    \item $F \colon \Hom(X,Y)\to \Hom(F(X),F(Y))$ is linear,
    \item $F$ is $*$-preserving, i.e.
    \begin{align*}
    F(x^{*}) = F(x)^{*}\qquad\text{for all }x\in \Hom(X,Y)\,.
    \end{align*}
\end{enumerate}

A category $\cC$ is called
a \emph{W${}^\ast$-category} if it is a C${}^\ast$-category
such that for each object $X, Y \in \cC$, the Banach space of morphisms $\Hom(X, Y)$ has a predual. Furthermore, if $\cC$ is a W${}^\ast$-category and $X \in \cC$ is an object then $\Hom(X, X)$ is a von Neumann algebra.

For a $C^{\ast}$-algebra $X$, $X_{*}$ and $X^{*}$ denote the predual and dual of $X$, respectively.
A $W^{\ast}$-functor is the same as a $C^{\ast}$-functor, with the additional structure that 
the functor 
\begin{align*}
    F \colon \Hom(X,Y) \rightarrow \Hom(F(X),F(Y))
\end{align*}
admits a predual. By this, we mean the following. Let $A_{X,Y}$ denote the predual of $\Hom(X,Y)$ and 
let $B_{F(X),F(Y)}$ denote the predual of $\Hom(F(X),F(Y))$. Suppose
\begin{align*}
    i \colon \Hom(X,Y) \to A_{X,Y}^{\ast}, 
    \qquad
    j \colon \Hom(F(X),F(Y)) \to B_{F(X),F(Y)}^{\ast},
\end{align*}
where the maps $i$ and $j$ exist. Then there is a morphism
\begin{align*}
    F_{\ast} \colon B_{F(X),F(Y)}  \rightarrow A_{X,Y}
\end{align*}
such that
\begin{align*}
    \langle j (F(T)), b \rangle= \langle i(T), F_{\ast}(b) \rangle \quad \text{ for all }T \in \Hom(X,Y), b \in B_{F(X), F(Y)}\,,
\end{align*}
where $\langle \cdot, \cdot \rangle$ is the duality pairing.

A functor $F \colon \cC \to \cD$ is said to be \emph{faithful} if, for every pair of objects $X, Y \in \cC$, the map
\begin{align*}
\Hom(X,Y) \to \Hom(F(X), F(Y))
\end{align*}
is injective.
Furthermore, a functor $F$ is said to be \emph{full} if, for every pair of objects $X, Y \in \cC$, the map
\begin{align*}
\Hom(X,Y) \to \Hom(F(X), F(Y))
\end{align*}
is surjective, and $F$ is \emph{fully faithful} if $F$ is full and faithful.

Let $(\cC,\otimes,\mathbf{1},a^{\cC},\iota^{\cC})$
 and 
$(\cD,\otimes,\mathbf{1},a^{\cD},\iota^{\cD})$
be monoidal categories.
A \emph{monoidal functor} from $\cC$ to $\cD$ consists of a functor
$F \colon \cC \to \cD$ together with a natural isomorphism

\begin{align*}
J_{X,Y} \colon F(X)\otimes F(Y) \rightarrow F(X\otimes Y),
\end{align*}
defined for all $X,Y\in \cC$, and such that
$F(\mathbf{1})\cong \mathbf{1}$ in $\cD$.
These data must satisfy the coherence condition that, for all
$X,Y,Z \in \cC$, the diagram

\[
\begin{tikzcd}[column sep=large, row sep=large]
(F(X)\otimes F(Y))\otimes F(Z)
  \arrow[r,"J_{X,Y}\otimes \mathrm{id}_{F(Z)}"]
  \arrow[d,"a^{\mathcal{D}}_{F(X),F(Y),F(Z)}"']
&
F(X\otimes Y)\otimes F(Z)
  \arrow[r,"J_{X\otimes Y,\, Z}"]
&
F((X\otimes Y)\otimes Z)
  \arrow[d,"F(a^{\mathcal{C}}_{X,Y,Z})"]
\\
F(X)\otimes(F(Y)\otimes F(Z))
  \arrow[r,"\mathrm{id}_{F(X)}\otimes J_{Y,Z}"']
&
F(X)\otimes F(Y\otimes Z)
  \arrow[r,"J_{X,\, Y\otimes Z}"']
&
F(X\otimes (Y\otimes Z))
\end{tikzcd}
\]
commutes.

A monoidal functor $F$ is an \emph{equivalence of monoidal categories}
if the underlying functor $F \colon \cC \to \cD$ is an equivalence of
categories.

Recall that a \emph{monoidal natural transformation} between two monoidal functors $F, G \colon \cC \to \cD$ is a family of morphisms $\eta_X \colon F(X) \to G(X)$, natural in $X$, such that the following diagram commutes:
$$
\begin{tikzcd}
F(X) \otimes F(Y) 
  \arrow[rrr, "J^{F}_{X,Y}"] 
  \arrow[d, "\eta_X \otimes \eta_Y"'] 
& & & 
F(X \otimes Y) 
  \arrow[d, "\eta_{X\otimes Y}"] 
\\
G(X) \otimes G(Y) 
  \arrow[rrr, "J^{G}_{X,Y}"'] 
& & & 
G(X \otimes Y)
\end{tikzcd}
$$
where $F$ and $G$ are functors from $\cC$ to $\cD$.

Moreover, a \emph{C${}^{\ast}$-tensor category} is a C${}^{\ast}$-category equipped with a monoidal structure such that the tensor product is compatible with the $C{}^{\ast}$-operations, and a \emph{W${}^{\ast}$-tensor category} is exactly a
monoidal W${}^{\ast}$-category
with the additional requirement that the tensor product is normal (ultraweakly continuous) in each variable.

Examples of W${}^{\ast}$-tensor categories include the category of bimodules over a von Neumann algebra $M$, denoted by $\Bim(M)$, where objects consist of separable Hilbert spaces with left and right actions of $M$, morphisms are $M$-bimodular bounded maps, the tensor product structure is given by Connes fusion, and the unit object is the standard bimodule $L^2(M)$.
Another important example arising from algebraic quantum field theory is the \emph{DHR} superselection sectors (\cite{DoHaRo1969II}), which form a W${}^{\ast}$-tensor category. 
The objects consist of localized endomorphisms of the observable algebra, while the morphisms are intertwiners.
The tensor product structure is given by composition of localized sectors.
The most natural examples of monoidal functors encountered in conformal field theory are $\alpha$-induction and $\sigma$-restriction (details can be found in \cite{BiKaLoRe2014-2}).

We recall the definition of $G$-kernel here:
\begin{defi}\label{kernl}
Let $G$ be a second countable compact group and $M$ a type $\mathrm{III}$ factor. A monomorphism 
$\theta \colon G \to \Out(M)$ is said to be a $G$-kernel provided that there exists 
a Borel map $\alpha \colon G \to \Aut(M)$ satisfying
\begin{align*}
\varepsilon \circ \alpha = \theta,
\end{align*}

where $\varepsilon \colon \Aut(M) \to \Out(M)$ denotes the quotient map 
sending an automorphism $\alpha$ to its outer class $[\alpha]=\{\Ad v \circ \alpha : v \in \U(M)\}$, where $\U(M)$ is the unitary group of $M$.
\end{defi}

\subsection{Topology on morphisms in $\Amp(M)$}
Consider the W${}^{\ast}$-category of right $M$-modules, \ie pairs $\Hil_{M}=(\Hil, \rho)$ of a Hilbert space $\Hil$ with a right action $\rho$ of $M$ on $\Hil$.
Let $\Amp(M)$ be the full subcategory whose objects are $L^2M_M$ or $L^2M_M \otimes \Hil$.

Observe that $\Hom_{-M}(L^2M_M \otimes \Hil, L^2M_M \otimes \K)$ is the weak${}^{\ast}$-closure of the algebraic tensor product $M \odot \B(\Hil, \K)$ and the weak${}^{\ast}$-topology is the subspace topology of the predual of the von Neumann algebra
\begin{align*}
\End_{-M}((L^2M_M \otimes \Hil) \oplus (L^2M_M \otimes \K))=\End_{-M}((L^2M_M \otimes (\Hil \oplus \K)))
\end{align*}
via the canonical embedding.
We denote $\Hom_{-M}(L^2M_M \otimes \Hil, L^2M_M \otimes \K)=M \otimes \B(\Hil, \K)$, realizing that this is the spatial product of W${}^{\ast}$-algebras.
\begin{rmk}
Observe that $\B(\Hil, \K)$ sits as a subspace of $\B(\Hil \oplus \K)$ via the canonical embedding
\begin{align*}
\B(\Hil, \K) \ni t \mapsto V_{\K} t V_{\Hil}^{\ast} \in \B(\Hil \oplus \K)\,,
\end{align*}
where $V_{\K} \colon \K \to \Hil \oplus \K$ and $V_{\Hil} \colon \Hil \to \Hil \oplus \K$ are the canonical isometries.
Consequently, $t_n \to t$ in weak operator topology in $\B(\Hil, \K)$ if and only if for any $h\in \Hil$, $k \in \K$, one has $$\langle t_n h, k \rangle_{\K} \to \langle t h, k \rangle_{\K}.$$
Therefore, we say $x_n \to x$ in $M \otimes \B(\Hil, \K)$ in the weak operator topology if and only if, for any $\xi \in L^2(M)$, $h \in \Hil$ and $k \in \K$, one has
\begin{align*}
(\omega_{\xi, \xi} \otimes \omega_{h,k}) (x_n) \to (\omega_{\xi, \xi} \otimes \omega_{h,k}) (x)\,.
\end{align*}
Here $\omega_{\cdot, \cdot}$ denotes the vector state.
Therefore a sequence $x_n \to x$ in the weak${}^{\ast}$-topology if and only if $x_n \to x$ in the weak operator topology on bounded sets.
\end{rmk}

\section{Tensoriality}
\label{sec:Tensoriality}

Throughout this section, $X$ (and $Y$, $Z$) denotes an object in the category of measure spaces under consideration.
In this section, we show that the functor $F \colon X \mapsto \rho_X$ defined in the previous section is a unitary tensor functor. This means that it preserves the tensor product structure of the source category. The tensor product in the source category is given by the cartesian product of the underlying measure spaces. The tensor product in the target category is the composition of endomorphisms.

We first define the concept of an intertwiner in our setting.
\begin{defi}
    Let $\mathcal{A}_M, \mathcal{B}_M$ be objects in $\operatorname{Amp}(M)$. Let $\alpha\colon M\to \End_{-M}(\mathcal{A}_M)$ and $\beta \colon M\to \End_{-M}(\mathcal{B}_M)$ be normal $\ast$-morphisms. The space of intertwiners between $\alpha$ and $\beta$ is defined as
    \begin{equation}
        (\alpha, \beta) = \{T\in \Hom_{-M}(\mathcal{A}_M, \mathcal{B}_M): T\alpha(m) = \beta(m)T
        \text{ for all } m\in M\}\,.
    \end{equation}
    Here $\End_{-M}(\mathcal{A}_M)$ denotes the algebra of endomorphisms of $\mathcal{A}_M$ that commute with the right $M$-module structure.
\end{defi}

This definition is a generalization of the usual notion of intertwiners. For instance, if we consider two endomorphisms $\rho, \sigma \in \End(M)$, an intertwiner between them is an operator $T \in M$ such that $T\rho(m) = \sigma(m)T$ for all $m\in M$. In our notation, this corresponds to the case where $\mathcal{A}_M = \mathcal{B}_M = L^2(M)$ and $T \in (\rho, \sigma)$.

Our goal is to show that for any two objects $X, Y$ in our source category, there is a natural unitary isomorphism between $\rho_X \rho_Y$ and $\rho_{X\times Y}$. This is the content of the following proposition.
\begin{prop}
The functor $X \mapsto \rho_X$ is a unitary tensor functor.
\end{prop}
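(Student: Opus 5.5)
We construct explicit unitaries $J_{X,Y}\in(\rho_X\rho_Y,\rho_{X\times Y})$ and then check naturality and the hexagon-type coherence diagram from Section~\ref{sec:preliminaries}. The tensor product on concrete modules is $X\otimes Y=(X\times Y,\mu_X\times\mu_Y,g_X g_Y)$ with $g_{X\times Y}(x,y)=g_X(x)g_Y(y)$, so $L^2(X\times Y)=L^2X\otimes L^2Y$ as $C(G)$-modules with the coproduct action.

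\emph{Step 1: the $G$-kernel cocycle.} Since $\alpha$ lifts a homomorphism $\theta$, we have $\alpha_g\alpha_h=\Ad u(g,h)\circ\alpha_{gh}$ for unitaries $u(g,h)\in\U(M)$. Using a Borel cross-section for the quotient $\U(M)\to\U(M)/\TT=\U(M)/\mathcal Z$ (which is Polish, since $M$ has separable predual), we choose $u\colon G\times G\to\U(M)$ to be Borel. Associativity of $\alpha$ then gives $u(g,h)u(gh,k)=\omega(g,h,k)\,\alpha_g(u(h,k))u(g,hk)$ with $\omega$ a Borel $\TT$-valued $3$-cocycle.

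\emph{Step 2: computing $\rho_X\rho_Y$.} Write $\rho_X\rho_Y=\Ad U_X\circ\sigma_X\circ\Ad U_Y\circ\sigma_Y$. Apply the endofunctor ${}^{\rho}(\slot)$ of $\Amp(M)$ fibrewise: $\sigma_X$ extends to $M\otimes\B(L^2Y)\to M\otimes \B(L^2X)\otimes\B(L^2Y)$. This gives
\[
\rho_X\rho_Y=\Ad\bigl(U_X(1\otimes\ldots)\,\textstyle\int_X {}^{\alpha_{g_X(x)}}(U_Y)\otimes \dd E_X(x)\bigr)\circ\int_{X\times Y}\alpha_{g_X(x)}\alpha_{g_Y(y)}(\slot)\otimes \dd E_{X\times Y}.
\]
Next insert $W=\int_{X\times Y}u(g_X(x),g_Y(y))\otimes \dd E_{X\times Y}(x,y)$, a unitary in $M\otimes L^\infty(X\times Y)$ by Borel measurability. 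It converts the integrand to $\alpha_{g_X(x)g_Y(y)}=\alpha_{g_{X\times Y}(x,y)}$, i.e.\ $W^\ast\,(\cdots)\,W=\sigma_{X\times Y}$. We then set
\[
J_{X,Y}=U_{X\times Y}\,W^\ast\,\Bigl(\textstyle\int_X{}^{\alpha_{g_X(x)}}(U_Y)\otimes \dd E_X(x)\Bigr)^{\!\ast}(U_X\otimes 1)^\ast\ \ \text{(suitably arranged)},
\]
which is a unitary in $M$ intertwining $\rho_X\rho_Y$ and $\rho_{X\times Y}$. The main technical point here is that $x\mapsto{}^{\alpha_{g_X(x)}}(U_Y)$ is weak$^\ast$-measurable, so that the direct integral makes sense. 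This follows from the Borel property of $\alpha$ together with the weak$^\ast$-topology description in Section~\ref{sec:preliminaries}.

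\emph{Step 3: naturality.} For $C(G)$-equivariant $s\colon L^2X\to L^2X'$ and $t\colon L^2Y\to L^2Y'$, equivariance means $s,t$ commute with the decomposable operators. Hence $s\otimes t$ commutes with $W$ and with the fibrewise integrals, which gives $J_{X',Y'}(F_s\otimes F_t)=F_{s\otimes t}J_{X,Y}$, where $F_s\otimes F_t=F_s\rho_X(F_t)$.

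\emph{Step 4: coherence (the main obstacle).} Both composites in the coherence diagram reduce, after cancelling the $U$'s using functoriality of ${}^{\rho}(\slot)$ (${}^{\rho}(U_X)$ composed with ${}^{\rho\sigma}$ and so on), to integrals over $X\times Y\times Z$. The integrands are respectively $\alpha_{g}(u(h,k))u(g,hk)$ and $u(g,h)u(gh,k)$, evaluated at $g=g_X(x)$, $h=g_Y(y)$, $k=g_Z(z)$. By Step 1 these differ by the scalar function $\omega(g_X,g_Y,g_Z)$. The diagram therefore commutes provided the source associator is taken to be multiplication by $\omega\circ(g_X\times g_Y\times g_Z)$ on $L^2(X\times Y\times Z)$, the continuous analogue of $\mathrm{Hilb}^\omega_G$. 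Equivalently, it commutes on the nose when $\omega$ is trivialised. The bookkeeping of how ${}^{\alpha_g}$ acts on $U_Y$ and $U_{Y\times Z}$ inside the integrals is delicate, and we expect to handle it with the Cuntz-algebra and absorbing-unitary functor machinery of Sections~\ref{sec:Cuntz} and~\ref{sec:Cuntz functor}.
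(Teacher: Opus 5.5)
Your proposal follows essentially the paper's route. Your tensorator should read $U_{X\times Y}W^{\ast}({}^{\sigma_X}U_Y)^{\ast}U_X^{\ast}$, with $U_X^{\ast}$ rather than $(U_X\otimes 1)^{\ast}$; it is then exactly the paper's $W_{X,Y}$, since $U_X\,{}^{\sigma_X}(U_Y)={}^{\rho_X}U_Y\,(U_X\otimes 1_{L^2Y})$. Coherence comes down to the same cocycle identity, with the source associator given by multiplication by $\omega(g_X(x),g_Y(y),g_Z(z))$.

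The bookkeeping you defer is done in the appendix by pasting commuting squares. The key lemma it needs beyond functoriality of ${}^{\rho}(\cdot)$ is naturality in $\rho$: $u\,{}^{\rho}U={}^{\sigma}U\,(u\otimes 1)$ for $u\in(\rho,\sigma)$ (Proposition~\ref{tecintwin}). This is what lets you slide $W_{X,Y}$ past ${}^{\rho_X\rho_Y}U_Z$.
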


\begin{proof}
The proof proceeds by constructing a natural unitary intertwiner
$W_{X,Y} \in (\rho_X\rho_Y, \rho_{X \times Y})$.
Recall that $\rho_X = \Ad U_X \circ \sigma_X$. This means that $U_X$ is a unitary in $(\sigma_X, \rho_X)$.
The core of the proof lies in the analysis of the following diagram:

$$
\begin{tikzcd}
\rho_X\rho_Y \arrow[dd, rightarrow, "W_{X,Y}"'] \arrow[rrr, no head, Rightarrow, "(U_X^{\ast} \otimes 1_Y) {}^{\rho_X}U_Y^{\ast}"] &                                                                                                                          &                                                                                           & (\sigma_X\otimes \id_Y)\sigma_Y \arrow[dd, rightarrow, "W_{\sigma_{X,Y}}"] \\
\\ \rho_{X\times Y} \arrow[rrr, no head, Rightarrow, "U_{X\times Y}"'] & & & \sigma_{X\times Y}
\end{tikzcd}
$$
The diagram shows that the natural unitary isomorphism $W_{X,Y}$ is composed of three parts: $W_{X,Y} = U_{X\times Y}\, W_{\sigma_{X,Y}}\, (U_X^{\ast} \otimes 1_Y)\, ({}^{\rho_X}U_Y)^{\ast}$, where $W_{\sigma_{X,Y}} = (\id_{L^2(M)} \otimes \iota_{X,Y}^{\ast}) u_{X,Y}^{\ast}$ is the intertwiner from $(\sigma_X \otimes \id_Y)\sigma_Y$ to $\sigma_{X\times Y}$ (constructed in the appendix). This matches the explicit formula for $W_{X,Y}$ given in~\eqref{tensgg}.

The construction of $W_{\sigma_{X,Y}}$ relies on the associativity of the fusion of defects in the underlying field theory. The calculation involves a careful manipulation of the integral expressions for the $\sigma$ morphisms. The key identity is the following:
\begin{align}
    \nonumber &\int_{X\times Y\times Z}
    u_{g_X(x)g_Y(y), g_Z(z)}u_{g_X(x),g_Y(y)} \otimes 
    \dd E_X(x)\otimes \dd E_Y(y) \otimes \dd E_Z(z) \\
    &=
    \int_{X\times Y\times Z}
    \omega_{g_X(x), g_Y(y), g_Z(z)} 
    u_{g_X(x),g_Y(y) g_Z(z)}\alpha_{g_X(x)}(u_{g_Y(y),g_Z(z)}) \otimes 
    \dd E_X(x)\otimes \dd E_Y(y) \otimes \dd E_Z(z) \,.
\end{align}
This identity, where $\omega$ is the 3-cocycle associated with the $G$-kernel, allows us to relate the two different ways of composing the tensor products, as shown in the following commutative diagram.
The diagram below illustrates the associativity of the tensor product. The two paths from the top left to the bottom right corner correspond to the two ways of parenthesizing the tensor product of three objects $X, Y, Z$, and the computation shows these paths are related by a unitary isomorphism involving the 3-cocycle $\omega$.

$$
\begin{tikzcd}
(\rho_X\rho_Y)\rho_Z 
  \arrow[dd, leftrightarrow]
  \arrow[rrr, equal]
& & &
\rho_X(\rho_Y\rho_Z)
  \arrow[dd, leftrightarrow]
\\
\\
\rho_{X\times Y}\rho_Z
  \arrow[dd, leftrightarrow]
& & &
\rho_X\rho_{Y\times Z}
  \arrow[dd, leftrightarrow]
\\
\\
\rho_{(X\times Y)\times Z}
  \arrow[rrr, equal]
& & &
\rho_{X\times (Y\times Z)}
\end{tikzcd}
$$

\end{proof}
The details are discussed in the appendix section.
Let $X=(X, \mu_X, g_X)$ as before. We use shorthand notation $\id_X := \id_{\B(L^2X)}$
and $1_X: = 1_{\B(L^2X)} = \id_{L^2X}$ for unit morphism and unit operator, respectively.
From
\begin{align}
    &\int_{X\times Y\times Z}
    u_{g_X(x)g_Y(y), g_Z(z)}u_{g_X(x),g_Y(y)} \otimes 
    \dd E_X(x)\otimes \dd E_Y(y) \otimes \dd E_Z(z) \\
    &=
    \int_{X\times Y\times Z}
    \omega_{g_X(x), g_Y(y), g_Z(z)} 
    u_{g_X(x),g_Y(y) g_Z(z)}\alpha_{g_X(x)}(u_{g_Y(y),g_Z(z)}) \otimes 
    \dd E_X(x)\otimes \dd E_Y(y) \otimes \dd E_Z(z) 
\end{align}
where $\omega$ is a cocycle in $Z^3_{\Borel}(G, \U(1))$,
it follows that
\begin{align}
    & ((\sigma_X\otimes \id_Y)\otimes\id_Z)(\sigma_Y\otimes\id_Z)\sigma_Z \\
     & \xrightarrow{\sim}
    \int\limits_{X\times Y\times Z}
    \alpha_{g_X(x)}\alpha_{g_Y(y)}\alpha_{g_Z(z)}(\slot)
    \otimes \dd E_X(x)\otimes \dd E_Y(y) \otimes \dd E_Z(z) \\
    & \xrightarrow{u_{X, Y}^{\ast} \otimes 1_Z}
    \int\limits_{X\times Y\times Z}
    \alpha_{g_X(x)g_Y(y)}\alpha_{g_Z(z)} (\slot)
    \otimes \dd E_{X}(x) \otimes \dd E_Y(y) \otimes \dd E_Z(z) \\
    & \xrightarrow{u_{X\times Y, Z}^{\ast}}
    \int\limits_{X\times Y\times Z}
    \alpha_{g_X(x)g_Y(y)g_Z(z)} (\slot)
    \otimes \dd E_{(X\times Y)\times Z}((x,y), z) \\
\end{align}
equals
\begin{align}
    & ((\sigma_X\otimes \id_Y)\otimes\id_Z)(\sigma_Y\otimes\id_Z)\sigma_Z \\
     & \xrightarrow{\sim}
    \int\limits_{X\times Y\times Z}
    \alpha_{g_X(x)}\alpha_{g_Y(y)}\alpha_{g_Z(z)}(\slot)
    \otimes \dd E_X(x)\otimes \dd E_Y(y) \otimes \dd E_Z(z) \\
    & \xrightarrow{{}^{\sigma_X}u_{Y, Z}^{\ast}}
    \int\limits_{X\times Y\times Z}
    \alpha_{g_X(x)}\alpha_{g_Y(y)g_Z(z)} (\slot)
    \otimes \dd E_X(x) \otimes \dd E_Y(y) \otimes \dd E_Z(z)\\
    & \xrightarrow{u_{X, Y \times Z}^{\ast}}
    \int\limits_{X\times Y\times Z}
    \alpha_{g_X(x)g_Y(y)g_Z(z)} (\slot)
    \otimes \dd E_{X\times (Y\times Z)}(x,(y, z))\\
    & \xrightarrow{\tilde{\omega}_{X,Y,Z}^{\ast}}
    \int\limits_{X\times Y\times Z}
    \alpha_{g_X(x)g_Y(y)g_Z(z)} (\slot)
    \otimes \dd E_{(X\times Y)\times Z}((x,y), z)\,.
\end{align}

\section{Cuntz algebras and absorbing unitaries}
\label{sec:Cuntz}
Denote $I_n= \{1, \ldots, n\}$ for $n \in \NN$ and $I_\infty =\NN$.
Let $r_{L^2M_M} \colon L^2M_M \otimes \CC \to L^2M_M$ be the canonical isomorphism defined by $\xi \otimes c \mapsto c \xi$.
\begin{lem}\label{convseri}
Let $\{v_i: i \in I\}$ be a family generating the Cuntz algebra inside the von Neumann algebra $M \subseteq \B(L^2(M))$ and let $\ev_i:=\langle e_i| \in \B(\ell^2(I), \CC)$ where $\{e_i: i \in I\}$ is an orthonormal basis of $\ell^2(I)$, where $I=I_n$ or $I=I_\infty$. Then 
\begin{align*}
\sum_i v_i \otimes \ev_i
\end{align*}
converges in $\Hom_{-M}(L^2M_M \otimes \ell^2(I), L^2M_M \otimes \CC)$. 
\end{lem}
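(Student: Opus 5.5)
The case $I=I_n$ is immediate: the sum is finite and each $v_i\otimes \ev_i$ lies in $M\odot \B(\ell^2(I),\CC)\subseteq \Hom_{-M}(L^2M_M\otimes \ell^2(I), L^2M_M\otimes\CC)$. So the real content is $I=I_\infty$. There I will identify the partial sums $S_N=\sum_{i\le N} v_i\otimes \ev_i$ with concrete operators $L^2(M)\otimes \ell^2(\NN)\to L^2(M)$, namely $S_N(\sum_i \xi_i\otimes e_i)=\sum_{i\le N} v_i\xi_i$. I will then show that these converge in the strong operator topology to a bounded operator $S$ of norm at most $1$. Finally I will check that $S$ belongs to the weak${}^*$-closed space $M\otimes\B(\ell^2(I),\CC)$.

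For boundedness, I use the Cuntz relations $v_i^*v_j=\delta_{ij}1$ and $\sum_i v_iv_i^*\le 1$, where the sum converges strongly and equals $1$ in the $\mathcal O_\infty$ convention. Since the $v_i$ have mutually orthogonal ranges,
\[
\Bigl\|\sum_{i\le N} v_i\xi_i\Bigr\|^2=\sum_{i\le N}\|v_i\xi_i\|^2=\sum_{i\le N}\|\xi_i\|^2 .
\]
Hence $S_N$ is a partial isometry, and $\|S_N\|\le 1$ uniformly in $N$. Equivalently, $S_NS_N^*=\sum_{i\le N}v_iv_i^*\le 1$.

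For convergence, fix $\eta=\sum_i\xi_i\otimes e_i$ with $\sum\|\xi_i\|^2<\infty$. The same orthogonality computation gives $\|(S_{N'}-S_N)\eta\|^2=\sum_{N<i\le N'}\|\xi_i\|^2\to 0$. So $S_N\eta$ is Cauchy and $S_N\to S$ strongly, with $\|S\|\le 1$. Because the sequence is bounded, strong convergence implies weak operator convergence, and by the Remark in the preliminaries this is the same as weak${}^*$ convergence on bounded sets. Each $S_N$ lies in $M\odot\B(\ell^2(I),\CC)$, and $\Hom_{-M}(L^2M_M\otimes\ell^2(I),L^2M_M\otimes\CC)$ is by definition the weak${}^*$-closure of this algebraic tensor product. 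Therefore $S$ lies in it, and the series converges there; it converges even strongly, not only weak${}^*$.

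The main obstacle is making the membership and the topology match the paper's setup. Right $M$-modularity of $S$ follows from that of each $S_N$, since the right action is commuting and weakly continuous. To be safe I would also check weak${}^*$ convergence directly by pairing with $\omega_{\xi,\xi}\otimes\omega_{h,k}$ as in the Remark. This pairing gives $\langle \sum_{i\le N}\overline{h_i}\,k\, v_i\xi,\xi\rangle$, and its convergence follows from the Cauchy–Schwarz estimate together with $\sum_i\|v_i^*\xi\|^2\le\|\xi\|^2$. Finally, I would note that $SS^*=\sum_i v_iv_i^*$ and $S^*S=1$, which sets up the unitarity statement in Proposition~\ref{prop:UnitaryCuntz}.
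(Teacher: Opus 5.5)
Your proof is correct, but it follows a different route from the paper. The paper works directly in the weak${}^{\ast}$ topology. It writes a normal functional on $M\otimes\B(\ell^2(I),\CC)$ as $\phi=\sum_i\phi_i\otimes e_i$ with $\phi_i\in M_{\ast}$, asserts $\|\phi\|=\sum_i\|\phi_i\|$, and bounds the tail $\bigl|\phi\bigl(\sum_{i=n}^{m}v_i\otimes\ev_i\bigr)\bigr|$ by $\sum_{i=n}^{m}\|\phi_i\|$, using only $\|v_i\|\le 1$. You instead use $v_i^{\ast}v_j=\delta_{ij}1$ to show the partial sums $S_N$ are partial isometries. From this you get a strong limit $S$, which is in fact an isometry with $SS^{\ast}=\sum_i v_iv_i^{\ast}$. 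Only then do you pass to weak${}^{\ast}$ convergence, via the uniform bound and the weak${}^{\ast}$-closedness of the Hom space.

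Your route gives more: a concrete limit, strong convergence, and the setup for unitarity in Proposition~\ref{prop:CuntzUnitary}. It also uses the hypothesis that actually matters, which the paper's estimate never invokes. The identity $\|\phi\|=\sum_i\|\phi_i\|$ is the predual norm of $M\otimes\ell^\infty(I)$, not of the row space $M\otimes\B(\ell^2(I),\CC)$. Already for $M=\CC$, the predual of $\B(\ell^2(I),\CC)$ carries the $\ell^2$-norm rather than the $\ell^1$-norm. Some orthogonality of the $v_i$ is unavoidable: for $v_i=1$ the partial sums have norm $\sqrt{N}$, and a weak${}^{\ast}$-convergent sequence must be bounded. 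Your bound $\|S_N\|\le 1$ is exactly the missing ingredient.

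One minor point: the lemma only needs $v_i^{\ast}v_j=\delta_{ij}1$. Your remark that $\sum_i v_iv_i^{\ast}=1$ holds ``in the $\mathcal{O}_\infty$ convention'' is not the standard convention, and that relation plays no role here. It matters only for the unitarity statement.
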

\begin{proof}
Since $\Hom_{-M}(L^2M_M \otimes \ell^2(I), L^2M_M \otimes \CC) \cong M \otimes \B(\ell^2(I), \CC)$
(spatial tensor product), it is enough to check convergence in $M \otimes \B(\ell^2(I), \CC)$ with respect to the weak${}^{\ast}$-topology.
Let $\phi$ be in the predual of $M \otimes \B(\ell^2(I), \CC)$; then
$$\phi=\sum_i \phi_i \otimes e_i$$ with $\phi_i \in M_{\ast}$.
Then $\|\phi\|=\sum_i \|\phi_i\|$.
Note that
\begin{align*}
\phi(m \otimes t)=\sum_i \phi_i(m) \otimes t e_i \qquad m \in M,~ t \in \B(\ell^2(I), \CC)\,.
\end{align*}
Thus
\begin{align*}
\left|\phi\left(\sum_{i=n}^{m} v_i \otimes \ev_i\right)\right|=\left|\sum_{i=n}^{m} \phi_i(v_i)\right| \leq \sum_{i=n}^{m} |\phi_i(v_i)| \leq \sum_{i=n}^{m} \|\phi_i\|\,.
\end{align*}
Thus the result follows.
\end{proof}
\begin{rmk}
Consequently, we also have that
\begin{align*}
\left(\sum_{i=1}^{\infty} v_i \otimes \ev_i \right)^{\ast}=\left(\sum_{i=1}^{\infty} v_i^{\ast} \otimes \ev_i^{\ast}\right)
\end{align*}
is convergent in the weak${}^{\ast}$-topology.
\end{rmk}

\begin{prop}
    \label{prop:CuntzUnitary}
 Let $I=I_n$ or $I=I_{\infty}$. Then there is a one-to-one correspondence between right $M$-modular unitary maps 
    $U\colon L^2M_M\otimes \ell^2 I\to L^2M_M$ and families 
    $\{v_i\in M\}_{i\in I}$ that give representations of the Cuntz algebra 
    $\mathcal{O}_n$ or $\mathcal{O}_{\infty}$ 
    in $M$.
\end{prop}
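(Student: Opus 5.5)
The correspondence is explicit in both directions. Given a Cuntz family $\{v_i\}_{i\in I}$ in $M$, i.e.\ $v_i^\ast v_j=\delta_{ij}1$ and $\sum_i v_iv_i^\ast=1$ (strongly, for $I=I_\infty$), we set
\begin{align*}
U_{(v_i)_i}:=r_{L^2M_M}\circ\sum_i v_i\otimes \ev_i \in \Hom_{-M}(L^2M_M\otimes\ell^2(I), L^2M_M),
\end{align*}
which is well defined by Lemma~\ref{convseri}. Concretely, $U(\xi\otimes e_j)=v_j\xi$. Conversely, given a right $M$-modular unitary $U$, we define $v_i:=U\circ(1_{L^2M}\otimes |e_i\rangle)$, viewed as an operator on $L^2M$, i.e.\ $v_i\xi=U(\xi\otimes e_i)$.

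\textbf{Step 1: the $v_i$ lie in $M$.} Each $v_i$ is a composition of right $M$-modular maps, so $v_i\in\End_{-M}(L^2M_M)=(JMJ)'=M$. Equivalently, under the identification $\Hom_{-M}(L^2M_M\otimes\ell^2I,L^2M_M)=M\otimes\B(\ell^2I,\CC)$, we slice $U$ by $|e_i\rangle$.

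\textbf{Step 2: the Cuntz relations and unitarity are equivalent.} We compute
\begin{align*}
v_i^\ast v_j=(1\otimes\langle e_i|)U^\ast U(1\otimes|e_j\rangle),
\end{align*}
so $U^\ast U=1$ gives $v_i^\ast v_j=\delta_{ij}$. Also
\begin{align*}
UU^\ast=\sum_i U(1\otimes|e_i\rangle\langle e_i|)U^\ast=\sum_i v_iv_i^\ast,
\end{align*}
where the sum converges strongly because $\sum_i|e_i\rangle\langle e_i|=1$ strongly and multiplication by fixed bounded operators is strongly continuous. Hence $UU^\ast=1$ gives $\sum_i v_iv_i^\ast=1$.

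For the converse direction, the Cuntz relations make the map $\xi\otimes e_j\mapsto v_j\xi$ isometric on finite sums, because the ranges $v_jL^2M$ are mutually orthogonal and each $v_j$ is an isometry. It therefore extends to an isometry $U$, which agrees with the weak$^\ast$ limit of Lemma~\ref{convseri}. Its range contains every $v_jv_j^\ast L^2M$, and these span a dense subspace because $\sum_j v_jv_j^\ast=1$. So $U$ is unitary. Right $M$-modularity follows since $v_j\in M$ commutes with $JMJ$.

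\textbf{Step 3: the two constructions are mutually inverse.} Starting from $(v_i)$, forming $U$ and slicing it by $|e_i\rangle$ recovers $v_i$. Starting from $U$, the two unitaries $U$ and $U_{(v_i)}$ agree on the vectors $\xi\otimes e_i$, which span a dense subspace, so they are equal.

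\textbf{Main obstacle.} The only delicate point is the case $I=I_\infty$. There one has to handle the convergence of the infinite sums, and check that the weak$^\ast$-convergent sum from Lemma~\ref{convseri} coincides with the strongly defined operator. For this I would pair both operators against the vector functionals $\omega_{\xi,\eta}\otimes\omega_{h,1}$: they give the same values, and on bounded sets the weak operator and weak$^\ast$ topologies agree, as noted in the remark on the topology of $\Amp(M)$.
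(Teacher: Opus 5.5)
Your proposal is correct and follows the paper's proof. It uses the same assignments $v_i\xi=U(\xi\otimes e_i)$ and $U=r_{L^2M_M}\sum_i v_i\otimes\ev_i$, gets $v_i\in M$ from commutation with the right action $JMJ=M'$, and reads the Cuntz relations off $U^\ast U=1$ and $UU^\ast=\sum_i U(1\otimes|e_i\rangle\langle e_i|)U^\ast$.

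The only differences are in the converse direction. There you get unitarity by extending $\xi\otimes e_j\mapsto v_j\xi$ isometrically and using $\sum_j v_jv_j^\ast=1$ for surjectivity, then identify this operator with the weak$^\ast$ series of Lemma~\ref{convseri}, whereas the paper multiplies the series termwise. You also make explicit the check that the two constructions are mutually inverse, which the paper leaves implicit.
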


\begin{proof}
Let $\{e_i: i \in I\}$ be an orthonormal basis of $\ell^{2}(I)$.
Note that for each $i$ the operator $|e_i \rangle \colon \CC \to \ell^2(I)$ is defined by
 \begin{align*}
 |e_i \rangle (\alpha)=\alpha e_i \qquad \alpha \in \CC\,,
 \end{align*}
 and the operator $\langle e_i|\colon \ell^2(I) \to \CC$ is defined by
 \begin{align*}
 \langle e_i|\eta=\langle \eta, e_i \rangle \qquad \eta \in \ell^{2}(I)\,.
 \end{align*}

 These two operators are adjoint to each other.
    Given a unitary $U \in \Hom_{-M}(L^2M_M \otimes \ell^2(I), L^2M_M)$, define 
    \begin{equation}
        v_i = U (\id_{L^2M_M} \otimes |e_i\rangle)r_{L^2M_M}^{\ast}\,.
    \end{equation} 
    
    Since $r_{L^2M_M}^{\ast}U \in \Hom_{-M}(L^2M_M \otimes \ell^2(I), L^2M_M \otimes \CC)$, we get
    \begin{align*}
    r_{L^2(M)_M}(m' \otimes \id_{\CC})r_{L^2M_M}^{\ast} U=U (m' \otimes \id_{\ell^2(I)})
    \end{align*}
    and consequently,
    \begin{align}\label{commutingU}
    m' U=U(m' \otimes \id_{\ell^2(I)}), \qquad m' \in M'\,.
    \end{align}
    For all $m' \in M'$ and $\xi \in L^2M_M$, we have 
    \begin{align*}
    (m' \otimes \id_{\CC}) r_{L^2M_M}^{\ast} \xi=(m' \otimes \id_{\CC}) (\xi \otimes 1_{\CC})=m' \xi \otimes 1_{\CC} \,.
    \end{align*}
    and 
    \begin{align*}
    r_{L^2M_M}^{\ast} m' \xi=r_{L^2M_M}^{\ast}(m' \xi)=m' \xi \otimes 1_{\CC}\,.
    \end{align*}
    For all $m' \in M'$, we get
    \begin{align}\label{Eqcuntz}
     (m' \otimes \id_{\CC}) r_{L^2M_M}^{\ast}=r_{L^2M_M}^{\ast} m' \,.
    \end{align}
   By using the definition of $v_i$ together with the Equation \eqref{commutingU} and the Equation~\eqref{Eqcuntz}, we get
    \begin{align*}
   m' v_i= m' U (\id_{L^2M_M} \otimes |e_i\rangle)r_{L^2M_M}^{\ast}&=U(m' \otimes \id_{\ell^2(I)})(\id_{L^2M_M} \otimes |e_i\rangle)r_{L^2M_M}^{\ast}\\
   &=U(m' \otimes  |e_i\rangle)r_{L^2M_M}^{\ast}\\
   &=U(\id_{L^2M_M} \otimes |e_i\rangle)(m' \otimes 1_{\CC})r_{L^2M_M}^{\ast}\\
   &=U(\id_{L^2M_M} \otimes |e_i\rangle)r_{L^2M_M}^{\ast} m' =v_i m' \qquad m' \in M'\,.
    \end{align*}
    Thus $v_i \in M$.
    In particular for $\xi \in L^2M_M$, 
    one has
    \begin{align*}
    v_i \xi= U (\xi \otimes e_i)\,.
    \end{align*}
    It suffices to check that $\{v_i: i \in I\}$ satisfy the Cuntz algebra relations.
    Note that for all $\xi, \eta \in L^2M_M$, we have
    \begin{align*}
    \langle v_i \xi, v_j \eta \rangle&=\langle U(\xi \otimes e_i), U(\eta \otimes e_j)\rangle\\
    &=\langle \xi \otimes e_i, \eta \otimes e_j \rangle \qquad (U \text{ is unitary})\\
    &=\langle \xi, \eta \rangle \langle e_i, e_j \rangle
    \end{align*}
    So we have $v_i^{\ast}v_i=1_M$ and $v_i^{\ast}v_j=0$ for $i \neq j$.
    We also have $v_i^{\ast}=r_{L^2M_M}(\id_{L^2M_M} \otimes \langle e_i|)U^{\ast}$.
    Notice that
    $$v_i v_i^{\ast}=U(\id_{L^2M_M} \otimes P_i)U^{\ast},$$ where $P_i=|e_i \rangle \langle e_i|$ is an orthogonal projection from $\ell^2(I)$ onto span$\{e_i\}$. 
    Therefore $\sum _{i} v_i v_i^{\ast}=U(\id_{L^2M_M} \otimes \sum_i P_i)U^{\ast}=U(\id_{L^2M_M} \otimes \id_{\ell^2(I)})U^{\ast}=1$.
    Hence $\{v_i: i \in I\}$ 
    fulfill the Cuntz algebra relations.
    Conversely, given a Cuntz family $\{v_i: i \in I\}$ in $M$, we define
    \begin{align}\label{unitarycun}
    U=r_{L^2M_M} \sum_{i} v_i \otimes  \langle e_i|\,.
    \end{align}
    Note that $U$ is well defined by Lemma~\ref{convseri}.
    Then notice that
    \begin{align*}
    U^{\ast} U= \sum_{i} \id_{L^2M_M} \otimes P_i=\id_{L^2M_M \otimes \ell^2(I)} \qquad(\text{the sum is convergent in the ambient topology})\,.
    \end{align*}
    Similarly observe that
    \begin{align*}
    UU^{\ast}&=r_{L^2(M)_M} (\sum_j v_j \otimes \langle e_j|)(\sum_{i} v_i^{\ast} \otimes |e_i \rangle)r_{L^2(M)_M}^{\ast}\\
    &=r_{L^2(M)_M}(\sum_i v_i v_i^{\ast} \otimes \id_{\CC})r_{L^2M_M}^{\ast}\\
    &=r_{L^2M_M}\left(\id_{L^2M_M} \otimes \id_{\CC}\right)r_{L^2M_M}^{\ast}=\id_{L^2(M)_M}\,.
    \end{align*}
    It suffices to check that
    \begin{align}\label{eqnabs}
    U(M \otimes \B(\ell^2I))U^{\ast}=M\,.
    \end{align}
    It suffices to check that Equation~\eqref{eqnabs} holds for any simple tensor in $M \otimes \B(\ell^2(I))$.
    Consider a simple tensor $m \otimes T \in M \otimes \B(\ell^2I)$. Applying Equation~\eqref{eqnabs}, for any $\xi \in L^2(M)_M$, we get
    \begin{align*}
    U(m \otimes T)U^{\ast} \xi&=r_{L^2(M)_M}\left(\sum_{i,j} v_i m v_j^{\ast} \otimes \langle e_i|T|e_j \rangle \right) r_{L^2(M)_M}^{\ast} \xi\\
    &=r_{L^2(M)_M}\left(\sum_{i,j} v_i m v_j^{\ast} \otimes \langle Te_j, e_i \rangle_{\ell^2(I)}\right) r_{L^2(M)_M}^{\ast} \xi\\
    &=r_{L^2(M)_M} \sum_{i,j} v_i m v_j^{\ast} \otimes \langle Te_j, e_i \rangle_{\ell^2(I)}(\xi \otimes 1_{\CC})\\
    &=r_{L^2(M)_M} \sum_{i,j} v_i m v_j^{\ast} \xi \otimes \langle Te_j, e_i \rangle_{\ell^2(I)}\\
    &=\sum_{i,j} \langle Te_j, e_i \rangle_{\ell^2(I)} v_i m v_j^{\ast} \xi\\
    &=\left(\sum_{i,j} \langle Te_j, e_i \rangle_{\ell^2(I)} v_i m v_j^{\ast}\right)\xi\,.
    \end{align*}
    Since $v_i, m\in M$, one has $\sum_{i,j} \langle Te_j, e_i \rangle_{\ell^2(I)} v_i m v_j^{\ast} \in M$, which concludes the proof.
\end{proof}

\section{Functor arising from Cuntz algebras and absorbing unitaries}
\label{sec:Cuntz functor}
Given a type $\mathrm{III}$ factor $M$ and Hilbert spaces $\Hil$ and $\K$, recall that
\begin{align*}
\End_{-M}(L^2M_M \otimes \Hil)=M \otimes \B(\Hil) \subseteq \B(L^2M_M \otimes \Hil)\,.
\end{align*}
If $\rho \in \End(M)$ then ${}^{\rho}(\cdot):=\rho \otimes \id_{\B(\Hil)} \in \End(M \otimes \B(\Hil))$, given by linear and normal extension of ${}^{\rho}(\cdot)$ on elementary tensors, namely
\begin{align*}
{}^{\rho}(m \otimes t):=\rho(m) \otimes t \qquad m \in M, ~t \in \B(\Hil)\,.
\end{align*}
Observe that $\Hom_{-M}(L^2M_M \otimes \Hil, L^2M_M \otimes \K)$ can be embedded in $\End_{-M}(L^2M_M \otimes (\Hil \oplus \K))$, identified with
\begin{align*}
\begin{pmatrix}
\End_{-M}(L^2M_M \otimes \Hil) & \Hom_{-M}(L^2M_M \otimes \K, L^2M_M \otimes \Hil) \\
\Hom_{-M}(L^2M_M \otimes \Hil, L^2M_M \otimes \K)  & \End_{-M}(L^2M_M \otimes \K)
\end{pmatrix}\,.
\end{align*}
More precisely, the embedding is given as
\begin{align*}
m \otimes s \mapsto \begin{pmatrix}
0 & 0 \\
m \otimes s & 0
\end{pmatrix} \qquad m \in M, s \in \B(\Hil, \cK)\,.
\end{align*}
On the other hand, the map in the other direction is given by
\begin{align*}
\begin{pmatrix}
a & b \\
c  & d
\end{pmatrix} \mapsto \tilde{V}_{\Hil} a \tilde{V}_{\Hil}^{\ast} + \tilde{V}_{\Hil} b \tilde{V}_{\K}^{\ast}+ \tilde{V}_{\K} c \tilde{V}_{\Hil}^{\ast}+ \tilde{V}_{\K} d \tilde{V}_{\K}^{\ast}\,,
\end{align*}
where $\tilde{V}_{\cdot}=1_M \otimes V_{\cdot}$.
Therefore we have
\begin{align*}
\Hom_{-M}(L^2M_M \otimes \Hil, L^2M_M \otimes \K) \hookrightarrow \End_{-M}(L^2M_M \otimes (\Hil \oplus \K))\,. 
\end{align*}
Note that the image of the aforementioned embedding is precisely
\begin{align*}
\{x \in \End_{-M}(L^2M_M \otimes (\Hil \oplus \K)): (1 \otimes p_{\K})x (1 \otimes p_{\Hil})=x\}\,,
\end{align*}
where $p_{\cdot}=V_{\cdot}V_{\cdot}^{\ast}$.
Observe that
\begin{align*}
{}^{\rho}((1 \otimes p_{\K}) x (1 \otimes p_{\Hil}))=(1 \otimes p_{\K}){}^{\rho}(x) (1 \otimes p_{\Hil})\,.
\end{align*}
Naturally define 
\begin{align}\label{endofn}
{}^{\rho}(x):=\tilde{V}_{\K}^{\ast}{}^{\rho}(\tilde{V}_{\K} x \tilde{V}_{\Hil}^{\ast})\tilde{V}_{\Hil} \quad x \in \Hom_{-M}(L^2M_M \otimes \Hil, L^2M_M \otimes \K)\,.
\end{align}
In particular, for $x \in \End_{-M}(L^2M_M)$,
one has
\begin{align}\label{specialcase}
{}^{\rho}(x)=\rho(x) \qquad \rho \in \End(M)\,.
\end{align}
Moreover, we also have 
\begin{align}\label{specialcase2}
{}^{\rho}(x)=r_{L^{2}M_M}{}^{\rho}(r_{L^2M_M}^{\ast}x) \quad x \in \Hom_{-M}(L^{2}M_M \otimes \Hil, L^2M_M)\,.
\end{align}
Notice that
\begin{align*}
{}^{\rho}(m \otimes s)={}^{\rho}(m) \otimes s \qquad m \in M, s \in \B(\Hil, \K)\,.
\end{align*}

It suffices to show that 
\begin{align*}
{}^\rho({}^{\sigma}(x))={}^{\rho \sigma}(x) \qquad \rho,~\sigma \in \End(M), \qquad x \in \Hom_{-M}(L^2M_M \otimes \Hil, L^2M_M \otimes \K)\,,
\end{align*} which will show that it is an endofunctor.
We compute 
\begin{align*}
{}^\rho({}^{\sigma}(x))&={}^{\rho}(\tilde{V}_{\K}^{\ast}{}^{\sigma}(\tilde{V}_{\K} x \tilde{V}_{\Hil}^{\ast})\tilde{V}_{\Hil})\\
&=\tilde{V}_{\K}^{\ast}{}^{\rho}(\tilde{V}_{\K}\tilde{V}_{\K}^{\ast}({}^{\sigma}(\tilde{V}_{\K}x \tilde{V}_{\Hil}^{\ast})\tilde{V}_{\Hil}\tilde{V}_{\Hil}^{\ast})\tilde{V}_{\Hil}\\
&=\tilde{V}_{\K}^{\ast}{}^{\rho}((1 \otimes p_{\K}){}^{\sigma}(\tilde{V}_{\K}x \tilde{V}_{\Hil}^{\ast})(1 \otimes p_{\Hil}))\tilde{V}_{\Hil}\\
&=\tilde{V}_{\K}^{\ast}{}^{\rho}({}^{\sigma}(\tilde{V}_{\K}x \tilde{V}_{\Hil}^{\ast}))\tilde{V}_{\Hil}\\
&=\tilde{V}_{\K}^{\ast}{}^{\rho \sigma}(\tilde{V}_{\K}x \tilde{V}_{\Hil}^{\ast})\tilde{V}_{\Hil} \qquad\text{(since the action is $\rho \sigma \otimes 1$ on $\Hom_{-M}(L^2M_M \otimes (\Hil \oplus \K))$)} \\
&={}^{\rho \sigma}(x) \qquad\text{(by Equation~\eqref{endofn})}\,.
\end{align*}
In particular, we want to show that
\begin{align*}
{}^{\rho}(xy)={}^{\rho}(x) {}^{\rho}(y)
\end{align*}
for any composable morphisms $x, y \in \Amp(M)$.
For this we need Roberts' $3$ by $3$ matrix trick.
Let $y \in \Hom_{-M}(-\Hil, -\K)$ and $x \in \Hom_{-M}(-\K, -\LL)$.
To avoid notational complexity, we write any right $M$-module $L^2M_M \otimes \Hil$ as $-\Hil$.
Then we have the embedding given by
\begin{align*}
\Hom_{-M}(-\Hil, -\K) \hookrightarrow 
& \End_{-M}(-(\Hil \oplus \K \oplus \LL))\\
& \cong
\begin{pmatrix}
\End_{-M}(-\Hil) & \Hom_{-M}(-\K,-\Hil) & \Hom_{-M}(-\LL,-\Hil)\\
\Hom_{-M}(-\Hil, -\K) & \End_{-M}(-\K) & \Hom_{-M}(-\LL, -\K)\\
\Hom_{-M}(-\Hil, -\LL) & \Hom_{-M}(-\K, -\LL) & \End_{-M}(-\LL)
\end{pmatrix}
\end{align*}
via the map
\begin{align}\label{Rob3by3}
\Hom_{-M}(-\Hil, -\K) \ni x \mapsto \tilde{V}_{\K} x \tilde{V}_{\Hil}^{\ast} \in \End_{-M}(-(\Hil \oplus \K \oplus \LL))\,,
\end{align}
where $V_1=V_{\Hil}, V_2=V_{\K}$, and $V_3=V_{\LL}$, and $\tilde V_{\cdot}=1_M \otimes V_{\cdot}$.
The image of the embedding~\eqref{Rob3by3} is precisely
\begin{align*}
\{x \in \End_{-M}(-(\Hil \oplus \K \oplus \LL)):(1 \otimes p_{\K})x(1 \otimes p_{\Hil})=x\}\,.
\end{align*}
Therefore by applying Equation~\eqref{endofn} via the embedding~\eqref{Rob3by3}, we get
\begin{align*}
&{}^{\rho}x {}^{\rho}y=\tilde{V}_{\LL}^{\ast}{}^{\rho}(\tilde{V}_{\LL} x \tilde{V}_{\K}^{\ast})\tilde{V}_{\K} \tilde{V}_{\K}^{\ast}{}^{\rho}(\tilde{V}_{\K} y \tilde{V}_{\Hil}^{\ast})\tilde{V}_{\Hil} \\
&=\tilde{V}_{\LL}^{\ast}({}^{\rho}\tilde{V}_{\LL} x \tilde{V}_{\K}^{\ast}{}^{\rho}\tilde{V}_{\K} \tilde{V}_{\K}^{\ast}{}^{\rho}\tilde{V}_{\K} y \tilde{V}_{\Hil}^{\ast}{}^{\rho}\tilde{V}_{\Hil}\tilde{V}_{\Hil}^{\ast})\tilde{V}_{\Hil}\\
&=\tilde{V}_{\LL}^{\ast}{}^{\rho}(\tilde{V}_{\LL} x \tilde{V}_{\K}^{\ast}\tilde{V}_{\K} \tilde{V}_{\K}^{\ast}\tilde{V}_{\K} y \tilde{V}_{\Hil}^{\ast}\tilde{V}_{\Hil}\tilde{V}_{\Hil}^{\ast})\tilde{V}_{\Hil}\\
&=\tilde{V}_{\LL}^{\ast}{}^{\rho}(\tilde{V}_{\LL} xy \tilde{V}_{\Hil}^{\ast})\tilde{V}_{\Hil} \qquad\text{(since ${}^{\rho}$ is a functor on $\End_{-M}(-(\Hil \oplus \K \oplus \LL))$)}\\
&={}^{\rho}(xy)\,.
\end{align*}
\begin{prop}\label{comp}
Let $M$ be a type $\mathrm{III}$ factor. Let $\Hil$ be a separable Hilbert space and $\sigma \colon M \to M \otimes \B(\Hil)$ is a morphism and $U \in \Hom_{-M}(L^2M_M \otimes \Hil,  L^2M_M)$ be a unitary.
Then $\rho=\Ad U \circ \sigma \in \End(M)$.
Then for any $\tau \in \End(M)$
\begin{align*}
\tau  \rho=\Ad {}^{\tau}U (\tau \otimes \id_{\B(\Hil)}) \sigma\,.
\end{align*}
\begin{proof}
Consider $r_{L^2M_M}^{\ast} U \in \Hom_{-M}(L^2M_M \otimes \Hil, L^2M_M \otimes \CC)$.
Recall from Equation~\eqref{specialcase2} that we have
\begin{align*}
{}^{\tau}U &=r_{L^{2}M_M} {}^{\tau} (r_{L^{2}M_M}^{\ast} U) \in \Hom_{-M}(L^2M_M \otimes \Hil, L^2M_M)\,.
\end{align*}
We write $r$ for $r_{L^{2}M_M}$ as shorthand notation.
Notice
\begin{align*}
r^{\ast}U \sigma(m) U^{\ast} r (m' \otimes \id_{\CC})(\xi \otimes 1_{\CC})&=r^{\ast}U \sigma(m) U^{\ast} m' \xi\\
&=\rho(m) m' \xi \otimes 1_{\CC}
\end{align*}
and
\begin{align*}
(m' \otimes \id_{\CC})r^{\ast}U \sigma(m) U^{\ast} r(\xi \otimes 1_{\CC})&=(m' \otimes \id_{\CC})r^{\ast}U \sigma(m) U^{\ast} \xi\\
&=(m' \otimes \id_{\CC})r^{\ast} \rho(m)\xi\\
&=(m' \otimes \id_{\CC})(\rho(m)\xi \otimes 1_{\CC})\\
&=m'\rho(m)\xi \otimes 1_{\CC}\,.
\end{align*}
Consequently, we note that
$r^{\ast}U \sigma(m) U^{\ast} r \in \End_{-M}(L^2M_M \otimes \CC)$ for each $m \in M$.
First note that
\begin{align*}
r (\tau \otimes \id_{\CC})(r^{\ast} U \sigma(m) U^{\ast} r) r^{\ast}\xi
&= r (\tau \otimes \id_{\CC})(r^{\ast} U \sigma(m) U^{\ast} \xi)\\
&= r (\tau \otimes \id_{\CC})(r^{\ast} \rho(m) \xi)\\
&= r (\tau \otimes \id_{\CC})(\rho(m) \xi \otimes 1_{\CC})\\
&=r(\tau(\rho(m)) \xi \otimes 1_{\CC})\\
&=\tau \rho (m) \xi \qquad \xi \in L^2M_M\,.
\end{align*}
Therefore, we get
\begin{align*}
\tau \rho(m)&=\Ad r (\tau \otimes \id_{\CC})(r^{\ast} U \sigma(m) U^{\ast} r)\\
&=\Ad r {}^{\tau}(r^{\ast}U \sigma(m) U^{\ast} r)\\
&=\Ad r {}^{\tau}(r^{\ast}U) {}^{\tau}(\sigma(m)) {}^{\tau}(U^{\ast} r) \qquad\text{(by functoriality of ${}^{\tau}(\cdot)$)}\\
&={}^{\tau}(U) {}^{\tau}(\sigma(m)) {}^{\tau}(U)^{\ast}\,,
\end{align*}
and hence
\begin{align}\label{comps1}
\tau \rho(m)={}^{\tau}(U) {}^{\tau}(\sigma(m)) {}^{\tau}(U)^{\ast}\,.
\end{align}
On the other hand, as $\sigma(m) \in \End_{-M}(L^{2}M_M \otimes \Hil)$, note that
\begin{align*}
(\tau \otimes \id_{\B(\Hil)})(\sigma(m))={}^{\tau}(\sigma(m))\,.
\end{align*}
As a result, we get
\begin{align}\label{composi2}
\Ad {}^{\tau}U (\tau \otimes \id_{\B(\Hil)}) \sigma(m)={}^{\tau}(U) {}^{\tau}(\sigma(m)) {}^{\tau}(U)^{\ast}\,.
\end{align}
Thus the left-hand side equals the right-hand side by Equations~\eqref{comps1} and~\eqref{composi2}, which concludes the proof.
\end{proof}
\end{prop}

\section{Minimal G-kernels}
\label{sec:minimal}
Let $G$ be a second countable compact group.
Consider a Borel lift $\alpha \colon G \to \Aut(M)$ of a $G$-kernel $\theta \colon G \to \Out(M)$, namely $[\alpha_g]=\theta_g$ for all $g \in G$.

Let $X=(X, \mu_X, g_X)$ be a triple consisting of a topological space $X$, a finite measure $\mu_X$, and a Borel map $g_X\colon X\to G$.

Consider the projection-valued measure $E_X=E_{\mu_X}$ (also known as a spectral measure) characterized by
$M_{f\circ g_X} = \int_X f(g_X(x)) \,\dd E_X(x)$ for $f\in C(G)$.
Define 
\begin{align}\label{partihom}
    \sigma_X(m) = \int_X \alpha_{g_X(x)}(m)\otimes \dd E_{X}(x)\,.
\end{align}
Then $\sigma_X\in \Hom(M, M \otimes \B(L^2(X)))$.
Because $M$ is type III, we can choose a unitary
$U_X\in \U_{-M}(L^2M_M\otimes L^2(X) , L^2M_M)$ (unitary right $M$-modular map) and define
\begin{align}\label{partiendo}
\rho_X = \Ad U_X \circ \sigma_X \in \End(M)\,.
\end{align}

In particular, let $\mu \in \Meas(G)$, where $\Meas(G)$ is the set of finite measures on $G$.
In this case, we have the triplet $(G, \mu, \id \colon G \to G)$.
Hence, we have
\begin{align*}
\sigma_G(m)=\int_G \alpha_{g}(m)\otimes \dd E_{G}(g)\,.
\end{align*}
We call a Borel subset $A$ of $G$ \emph{proper} if both $\mu(A)>0$ and $\mu(G \backslash A)>0$.
For a proper Borel subset $E \subset G$, define
\begin{align*}
\mu_E(A)=\mu(A \cap E), \qquad \text{for Borel measurable sets } A \subset G\,.
\end{align*}

\begin{prop}\label{Fullnesscdn}
Let $\mu \in \Meas(G)$ with $\alpha$ a lift of the $G$-kernel $\theta$. Then the following conditions are equivalent:
\begin{enumerate}
\item [{\rm (i)}] For any pair $E$ and $F$ of proper Borel subsets of $G$ with $\mu(E \cap F)=0$, the corresponding morphisms $\rho_{\mu_E}$ and $\rho_{\mu_F}$ admit no nontrivial intertwiners, equivalently, $(\rho_{\mu_E}, \rho_{\mu_F})=0$.
\item [{\rm (ii)}] For any proper Borel subset $E$ of $G$, the morphisms $\rho_{\mu_E}$ and $\rho_{\mu_{G\backslash E}}$ admit no nontrivial intertwiners, equivalently,
\begin{align*}
(\rho_{\mu_E}, \rho_{\mu_{G \backslash E}})=0\,.
\end{align*}
\end{enumerate}
\end{prop}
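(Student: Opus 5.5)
The implication (i)$\Rightarrow$(ii) is immediate. If $E$ is a proper Borel subset, then so is $F=G\backslash E$, since $\mu(G\backslash E)>0$ and $\mu(G\backslash(G\backslash E))=\mu(E)>0$. Moreover $\mu(E\cap F)=0$, so (i) applied to this pair gives $(\rho_{\mu_E},\rho_{\mu_{G\backslash E}})=0$.

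For (ii)$\Rightarrow$(i) the plan is to reduce a general disjoint pair to a complementary pair using the functor of Proposition~\ref{prop:UnitaryFunctor}. Let $E,F$ be proper with $\mu(E\cap F)=0$. Then $F$ agrees with $F\backslash E$ up to a $\mu$-null set, so $\mu_F=\mu_{F\backslash E}$ as measures and we may assume $E\cap F=\emptyset$, hence $F\subseteq G\backslash E$. Set $E'=G\backslash E$, which is proper. Since $\mu_F$ is the restriction of $\mu_{E'}$ to $F$, the multiplication operator by $\chi_F$ gives a $C(G)$-equivariant isometry $s\colon L^2(G,\mu_F)\to L^2(G,\mu_{E'})$, namely the inclusion of functions supported in $F$. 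Applying the functor, $F_s=U_{E'}(1_M\otimes s)U_{\mu_F}^{\ast}$ is an isometry in $(\rho_{\mu_F},\rho_{\mu_{E'}})$.

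Now take $T\in(\rho_{\mu_E},\rho_{\mu_F})$. Then $F_sT\in(\rho_{\mu_E},\rho_{\mu_{G\backslash E}})$, which vanishes by (ii). Since $F_s$ is an isometry, $T=F_s^{\ast}F_sT=0$, so $(\rho_{\mu_E},\rho_{\mu_F})=0$, which is (i).

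The only step needing care is the construction of the isometric intertwiner. One should check directly that $1_M\otimes s$ intertwines $\sigma_{\mu_F}$ and $\sigma_{\mu_{E'}}$. The spectral measure $E_{\mu_F}$ is the compression of $E_{\mu_{E'}}$ by $s$, that is, $s\,\dd E_{\mu_F}(g)=\dd E_{\mu_{E'}}(g)\,s$, because multiplication operators commute with restriction to $F$. Integrating $\alpha_g(m)\otimes\dd E(g)$ then gives
\begin{align*}
(1_M\otimes s)\,\sigma_{\mu_F}(m)=\sigma_{\mu_{E'}}(m)\,(1_M\otimes s)\,.
\end{align*}
Conjugating by the unitaries $U$ yields the intertwining relation for $F_s$. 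The null-set reduction is harmless, since $\rho_X$ depends only on the measure class data through $L^2$.
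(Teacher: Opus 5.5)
Your proposal is correct and follows essentially the same route as the paper: both embed $L^2(G,\mu_F)$ isometrically into $L^2(G,\mu_{G\setminus E})$ and compose the given intertwiner with the resulting isometric intertwiner, so that (ii) forces it to vanish. The differences are cosmetic. The paper composes at the level of $\sigma$ via $\hat t=(\id_{L^2(M)}\otimes j)U_F^{\ast}tU_E$, whereas you push the isometry through the functor and work with $\rho$ directly; you also make explicit the injectivity step $T=F_s^{\ast}F_sT$, which the paper leaves implicit.
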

\begin{proof}
Assume condition $(i)$ holds; then by taking $F=G \backslash E$, condition $(ii)$ follows.

Conversely, assume condition $(ii)$ holds. Let $E, F$ be Borel measurable subsets such that $\mu(E \cap F)=0$; then $F \subseteq G \backslash E$ $\mu$-almost everywhere.
Here, by abuse of notation, we write $E$ instead of $\mu_E$.
Let $t \in (\rho_{E}, \rho_{F})$, then
\begin{align*}
&t U_E \sigma_E(m) U_E^{\ast}=U_F \sigma_F(m) U_F^{\ast} t\\
&U_F^{\ast} t U_E \sigma_E(m)=\sigma_F(m) U_F^{\ast} t U_E\,.
\end{align*}
Thus $\tilde{t}:=U_F^{\ast} t U_E \in (\sigma_E, \sigma_F)$.
Now note that $\Hil_E:=L^2(M) \otimes L^2(G, \mu_E)$ for any Borel measurable subset $E$ of $G$. Notice that $\sigma_E(M) \subseteq \B(\Hil_E)$.
Let $j$ be the isometry from $L^{2}(G, \mu_F)$ to $L^2(G, \mu_{G \backslash E})$.
Thus $\id_{L^2(M)} \otimes j$ is the embedding of $\Hil_F$ into $\Hil_{G \backslash E}$.
Then define $\hat{t}=(\id_{L^2(M)} \otimes j) \tilde{t}$.
Then $\hat{t} \in (\sigma_E, \sigma_{G \backslash E})=0$ because $(\rho_E, \rho_{G \backslash E})=0$ by assumption. Hence $t=0$, which concludes the proof.
\end{proof}

\begin{defi}\label{minG}
A $G$-kernel $\theta$ is said to be minimal if there exists a Borel lift $\alpha$ of $\theta$ that satisfies the conditions of Proposition~\ref{Fullnesscdn}.
\end{defi}
\begin{example}\label{minexmker}
\begin{enumerate}[label=(\roman*)]
\item For a finite group $G$, a $G$-kernel is minimal if and only if the $G$-kernel $\theta \colon G \ni g \mapsto [\alpha_g] \in \Out(M)$ is injective.
\item If a lift $\alpha \colon G \to \Aut(M)$ of a $G$-kernel is a minimal action, then the $G$-kernel is minimal.
\end{enumerate}
\end{example}

\subsection{Fullness of the functor}
\label{ssec:Fullness of the functor}
We consider the W${}^*$-category $\Rep_{\concrete}(C(G))$ of $C(G)$-modules that are separable Hilbert spaces.
Let $X=(X, \mu_X, g_X)$ denote a triple where $X$ is a topological space, $\mu_X$ is a finite measure, and $g_X\colon X\to G$ is a Borel map.

Then $L^2X$ becomes a $C(G)$-module via the action $f.\xi = M_{f\circ g_X}\xi$, where $f \in C(G)$ and $\xi \in L^2X$, and where $M_{\cdot}$ is the multiplication operator on $L^2X$.
In particular, $L^2X \in \Rep(C(G))$, and we regard $\Rep_{\concrete}(C(G))$ as a full subcategory of $\Rep(C(G))$.
Note that $\Rep(C(G))$ is a tensor category; see \cite{Ma2025}.
We define the map $F \colon \Rep_{\concrete}(C(G)) \to \End(M)$ on objects by
\begin{align*}
\Rep_{\concrete}(C(G)) \ni X \mapsto F(X)=\rho_X \in \End(M)
\end{align*}
and on morphisms by
\begin{align*}
F(t)=U_{Y} (1_M \otimes t) U_{X}^{\ast} \qquad t \in \Hom(X, Y)\,.
\end{align*}
Here $\rho_X$ is defined in Equation~\eqref{partiendo}.
\begin{thm}\label{Fullness}
Let $\theta \colon G \to \Out(M)$ be a $G$-kernel with a Borel lift $\alpha$.
Then the following conditions hold:
\begin{enumerate}[label=(\roman*)]
\item $F$ is a functor.
\item $F$ is faithful.
\item If $F$ is full, then the $G$-kernel is necessarily minimal.
\end{enumerate}
\end{thm}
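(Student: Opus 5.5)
Parts (i) and (ii) are verified directly from the formula $F(t)=U_Y(1_M\otimes t)U_X^{\ast}$; part (iii) is proved by contrapositive, using fullness and faithfulness to show that condition (ii) of Proposition~\ref{Fullnesscdn} holds for every $\mu\in\Meas(G)$.

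For (i), the first step is to check that $F(t)$ lies in $(\rho_X,\rho_Y)$. Since $t\colon L^2X\to L^2Y$ is $C(G)$-equivariant, it commutes with the multiplication operators $M_{f\circ g}$. By the spectral theorem and normality, $t$ then intertwines the projection-valued measures, $t\,E_X(g_X^{-1}(B))=E_Y(g_Y^{-1}(B))\,t$ for Borel $B\subseteq G$. Because the integrands $\alpha_{g_X(x)}(m)$ depend on $x$ only through $g_X(x)$, both $\sigma_X(m)$ and $\sigma_Y(m)$ are weak$^{\ast}$-limits of sums $\sum_k \alpha_{h_k}(m)\otimes E(g^{-1}(B_k))$ over Borel partitions $\{B_k\}$ of $G$. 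I expect this approximation, which needs the Borel measurability of $\alpha$ and point-weak$^{\ast}$ continuity, to be the most delicate part of (i). Granting it, $(1_M\otimes t)\sigma_X(m)=\sigma_Y(m)(1_M\otimes t)$. Conjugating by $U_X$ and $U_Y$ then gives $F(t)\rho_X(m)=\rho_Y(m)F(t)$. The functoriality identities follow from unitarity of the $U$'s:
\begin{align*}
F(st)=U_Z(1\otimes s)U_Y^{\ast}U_Y(1\otimes t)U_X^{\ast}=F(s)F(t),\qquad F(1_X)=U_XU_X^{\ast}=1 .
\end{align*}
Linearity and $F(t^{\ast})=F(t)^{\ast}$ are immediate.

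For (ii), suppose $F(t)=0$. Then $1_M\otimes t=U_Y^{\ast}F(t)U_X=0$, and the map $t\mapsto 1_M\otimes t$ is injective, so $t=0$.

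For (iii), assume $F$ is full. Fix $\mu$ and a proper Borel $E\subseteq G$, and write $X=(G,\mu_E,\id)$ and $Y=(G,\mu_{G\setminus E},\id)$. Let $s\in(\rho_{\mu_E},\rho_{\mu_{G\setminus E}})$. By fullness, $s=F(t)$ for some $C(G)$-equivariant $t\colon L^2(G,\mu_E)\to L^2(G,\mu_{G\setminus E})$. Such a $t$ commutes with the multiplication action of $C(G)$, hence with its weak closure $L^\infty(G,\mu)$, acting through the two quotients. In particular $t$ intertwines multiplication by $\chi_E$, which is the identity on $L^2(\mu_E)$ and zero on $L^2(\mu_{G\setminus E})$. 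Therefore $t=t\chi_E=\chi_E t=0$, and $s=0$.

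Hence condition (ii) of Proposition~\ref{Fullnesscdn} holds for the given lift $\alpha$ and every $\mu$, and the $G$-kernel is minimal by Definition~\ref{minG}. The key point in (iii) is passing from $C(G)$-equivariance to $L^\infty$-equivariance, which follows from normality of the two representations and density of $C(G)$ in $L^\infty(G,\mu)$ in the strong operator topology.
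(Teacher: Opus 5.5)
Your proposal is correct and follows essentially the same route as the paper. Parts (i) and (ii) come from the same computations with the unitaries $U_X$. In (iii), fullness transports the vanishing of $\Hom$ between the $C(G)$-modules $L^2(G,\mu_E)$ and $L^2(G,\mu_{G\setminus E})$ to the vanishing of $(\rho_{\mu_E},\rho_{\mu_{G\setminus E}})$; the paper checks condition (i) of Proposition~\ref{Fullnesscdn} directly, while you check the equivalent condition (ii). Beyond the paper, you verify that $F(t)$ actually lies in $(\rho_X,\rho_Y)$, and you justify the vanishing of that $\Hom$ space by extending $C(G)$-equivariance to $L^\infty$-equivariance; the paper leaves both points implicit.
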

\begin{proof}
Let $t_1 \in \Hom(X_1, X_2)$ and $t_2 \in \Hom(X_2, X_3)$.
Then $t_2t_1 \in \Hom(X_1, X_3)$ and therefore we have
\begin{align*}
F(t_2t_1) & =U_{X_3}(1_M \otimes t_2t_1)U_{X_1}^{\ast}\\
&=U_{X_3}(1_M \otimes t_2)U_{X_2}^{\ast} U_{X_2}(1_M \otimes t_1)U_{X_1}^{\ast}\\
&=F(t_2)F(t_1)\,.
\end{align*}
Furthermore $F(\id_X)=U_X(1_M \otimes \id_X)U_X^{\ast}=U_X\sigma_X(1_M)U_X^{\ast}=\rho_X(1_M)=\id_{\rho_X}=\mathrm{id}_{F(X)}$.
Hence $F$ is a functor.
For any $t \in \Hom(X, Y)$ such that $F(t)=0$, it follows that
\begin{align*}
U_{Y}(1_M \otimes t) U_X^{\ast}=0\,.
\end{align*}
Therefore, $t=0$, which shows that $F$ is faithful.
Finally, assume $F$ is full; then for any pair of proper Borel subsets $B, C$ of $G$, the map
\begin{align*}
F \colon \Hom(B, C) \to \Hom(\rho_B, \rho_C)
\end{align*}
is surjective, where $B$ can be regarded as an object in $\Rep_{\concrete}(C(G))$ as $(G,\mu_B,\id_{G})$.
If $\mu(B \cap C)=0$, then $\Hom(B, C)=0$.
Since $F$ is surjective, we have $\Hom(\rho_B, \rho_C)=0$.
Thus the $G$-kernel is minimal.
\end{proof}

\section{Tensor structure}
\label{sec:Tensor structure}
Let $G$ be a second countable compact group, $M \subseteq \B(L^2(M))$ a type $\mathrm{III}$ factor,
and $\alpha\colon G\to \Aut(M)$ a lift of a $G$-kernel $\theta \colon G \to \Out(M)$.
We fix a Borel map $u\colon G\times G \to U(M)$
such that
\begin{align*}
    \alpha_g \alpha_h = \Ad u_{g,h} \alpha_{gh} \qquad g,h \in G\,.
\end{align*}
 By the composition law of the endomorphisms applied to $\rho_X$ and $\rho_Y$ (see Proposition~\ref{comp}), we get
 \begin{align*}
 \rho_X  \rho_Y&=\Ad ({}^{\rho_X}U_Y) (\rho_X \otimes \id_{\B(L^{2}(Y))}) \sigma_Y\\
 &=\Ad ({}^{\rho_X}U_Y) ((\Ad U_X  \sigma_X) \otimes \id_{\B(L^{2}(Y))}) \sigma_Y\\
 &=\Ad ({}^{\rho_X}U_Y) (\Ad U_X \otimes \id_{\B(L^2(Y))}) (\sigma_X \otimes \id_{\B(L^{2}(Y))}) \sigma_Y\\
 &=\Ad ({}^{\rho_X}U_Y (U_X \otimes \id_{L^2(Y)}))  (\sigma_X \otimes \id_{\B(L^{2}(Y))}) \sigma_{Y}\,.
 \end{align*}
 Observe that 
 \begin{align*}
 (\sigma_X \otimes \id_{\B(L^{2}(Y))})  \sigma_Y(\cdot)&=(\sigma_X \otimes \id_{\B(L^{2}(Y))}) (\int_Y \alpha_{g_Y(y)}(\cdot) \otimes dE_{Y}(y))\\
 &=\int_Y \sigma_X(\alpha_{g_Y(y)}(\cdot)) \otimes dE_{Y}(y)\\
 &=\int_{Y} \int_{X} \alpha_{g_X(x)}(\alpha_{g_Y(y)}(\cdot)) \otimes dE_{X}(x) \otimes dE_{Y}(y)\\
 &=\int_{X \times Y} \alpha_{g_X(x)}  \alpha_{g_Y(y)}(\cdot) \otimes dE_{X}(x) \otimes dE_{Y}(y)\\
 &=\int_{X \times Y}  \Ad u_{g_X(x), g_Y(y)} \alpha_{g_X(x) g_Y(y)}(\cdot)\otimes dE_{X}(x) \otimes dE_{Y}(y)\,.
 \end{align*}
 Note that for the cartesian product $X \times Y$, the map is given by
 \begin{align*}
 g_{X \times Y}(x, y)=g_X(x) g_Y(y) \qquad (x, y) \in X \times Y\,.
 \end{align*}
 Define
 \begin{align*}
 u_{X, Y}=\int_{X \times Y} u_{g_X(x), g_Y(y)} \otimes dE_{X}(x) \otimes dE_{Y}(y)\,.
 \end{align*}
  Let $\iota_{X,Y} \colon L^2(X \times Y) \to L^2(X) \otimes L^2(Y)$ be the canonical isomorphism.
 We compute
 \begin{align*}
 &\Ad u_{X, Y} (\id_{L^2(M)} \otimes \iota_{X, Y}) \sigma_{X \times Y}(\cdot)\\
 &=\Ad u_{X, Y} (\id_{L^2(M)} \otimes \iota_{X, Y}) \int_{X \times Y} \alpha_{g_X(x)g_Y(y)}(\cdot) \otimes dE_{X \times Y}(x,y)\\
 &=\Ad u_{X, Y} \int_{X \times Y} \alpha_{g_X(x)g_Y(y)}(\cdot) \otimes dE_{X}(x) \otimes dE_{Y}(y)\\
 &=\left(\int_{X \times Y} u_{g_X(x), g_Y(y)} \otimes dE_{X}(x) \otimes dE_{Y}(y)\right) \left(\int_{X \times Y} \alpha_{g_X(x)g_Y(y)}(\cdot) \otimes dE_{X}(x) \otimes dE_{Y}(y)\right)\\
 & \quad \cdot \left(\int_{X \times Y} u_{g_X(x), g_Y(y)}^{\ast} \otimes dE_{X}(x) \otimes dE_{Y}(y)\right)\\
& =\int_{X^3 \times Y^3} u_{g_X(x_1), g_Y(y_1)} \alpha_{g_X(x_2)g_Y(y_2)}(\cdot)u_{g_X(x_3), g_Y(y_3)}^{\ast} \\
&\qquad \qquad \qquad\otimes dE_{X}(x_1)dE_{X}(x_2)dE_{X}(x_3) \otimes  dE_{Y}(y_1)dE_{Y}(y_2)dE_{Y}(y_3)\\
 &=\int_{X \times Y} u_{g_X(x), g_Y(y)} \alpha_{g_X(x)g_Y(y)}(\cdot) u_{g_X(x), g_Y(y)}^{\ast} \otimes  dE_{X}(x) \otimes  dE_{Y}(y)\\
 &=\int_{X \times Y} \Ad u_{g_X(x), g_Y(y)}  \alpha_{g_X(x)g_Y(y)}(\cdot)  \otimes  dE_{X}(x) \otimes  dE_{Y}(y)\,.
 \end{align*}
 Combining the above computations, we get
 \begin{align}
 (\sigma_X \otimes \id_{\B(L^{2}(Y))}) \sigma_Y(\cdot)=\Ad u_{X,Y} (\id_{L^2(M)} \otimes \iota_{X, Y})\sigma_{X \times Y}(\cdot)\,.
 \end{align}
 Moreover, we get
 \begin{align}
 \rho_X \rho_Y(\cdot)=\Ad ({}^{\rho_X}U_Y (U_X \otimes \id_{L^2(Y)}) u_{X, Y}  (\id_{L^2(M)} \otimes \iota_{X, Y})) \sigma_{X \times Y} (\cdot)\,.
 \end{align}
 In particular, one has
 \begin{align*}
 [\rho_X \rho_Y]=[\rho_{X \times Y}]\,.
 \end{align*}
 We define the tensorator by
 \begin{align}\label{tens}
 W_{X,Y}&=U_{X \times Y}(\id_{L^2(M)} \otimes \iota_{X,Y}^{\ast})u_{X,Y}^{\ast}({}^{\rho_X}U_Y (U_X \otimes \id_{L^{2}(Y)}))^{\ast}\\
 &=U_{X \times Y}(\id_{L^2(M)} \otimes \iota_{X,Y}^{\ast})u_{X,Y}^{\ast}(U_X^{\ast} \otimes \id_{L^{2}(Y)})({}^{\rho_X}U_Y)^{\ast}\,.
 \end{align}
 Since each term of $W_{X,Y}$ is unitary, $W_{X,Y}$ is unitary.
Note that
\begin{align*}
& W_{X, Y} (\rho_X \rho_Y)(\cdot)
=U_{X \times Y} (\id_{L^2(M)} \otimes \iota_{X,Y}^{\ast})u_{X,Y}^{\ast}(U_X^{\ast} \otimes \id_{L^{2}(Y)})\\
&{}^{\rho_X}U_Y^{\ast}{}^{\rho_X}U_Y (U_X \otimes \id_{L^{2}(Y)})u_{X,Y} (\id_{L^2(M)} \otimes \iota_{X,Y})\sigma_{X \times Y}(\cdot) ({}^{\rho_X}U_Y (U_X \otimes \id_{L^{2}(Y)})u_{X,Y} (\id_{L^2(M)} \otimes \iota_{X,Y}))^{\ast}\\
&=U_{X \times Y} \sigma_{X \times Y}(\cdot)(\id_{L^2(M)} \otimes \iota_{X,Y}^{\ast})u_{X,Y}^{\ast} (U_X^{\ast} \otimes \id_{L^{2}(Y)}) {}^{\rho_X}U_Y^{\ast}\\
&=U_{X \times Y} U_{X \times Y}^{\ast} \rho_{X \times Y}(\cdot)U_{X \times Y} (\id_{L^2(M)} \otimes \iota_{X,Y}^{\ast}) u_{X,Y}^{\ast} (U_X^{\ast} \otimes \id_{L^{2}(Y)}){}^{\rho_X}U_Y^{\ast}\\
&=\rho_{X \times Y}(\cdot)U_{X \times Y} (\id_{L^2(M)} \otimes \iota_{X,Y}^{\ast}) u_{X,Y}^{\ast} (U_X^{\ast} \otimes \id_{L^{2}(Y)}){}^{\rho_X}U_Y^{\ast}\\
&=\rho_{X \times Y}(\cdot) W_{X,Y}\,.
\end{align*}
Hence we get
\begin{align}\label{tenst}
W_{X, Y} \in \Hom_{\End(M)}(\rho_X \rho_Y, \rho_{X \times Y})\,.
\end{align}
Note that $W_{X, Y} \colon \rho_X \rho_Y \cong \rho_{X \times Y}$ is a natural isomorphism.
\subsection{Associator}
\label{ssec:Associator}
Given $\omega \in Z^{3}_{\Borel}(G, \TT)$, define the associator
\begin{align*}
[\alpha^{\omega}f](x, (y,z))=\omega(g_X(x), g_Y(y), g_Z(z))f((x, y), z)
\end{align*}
such that $\int |f((x, y),z)|^2 \, d E_{(X \times Y) \times Z}((x,y),z)=\int |f((x, y),z)|^2 \, d E_{X}(x)\,  dE_Y(y)\, dE_{Z}(z) < \infty$.
The $G$-kernel equation in the continuous case is precisely the following:

let $\iota_{X,Y} \colon L^2(X\times Y)\to L^2X \otimes L^2Y$ be the canonical isomorphism.
Since we need diagram commutation relation in order to show the pentagon diagram, we
consider the following computation in $\B(L^2(M) \otimes L^2X\otimes L^2Y\otimes L^2Z)$:
\begin{align}
    &(u_{X,Y}(\id_{L^2(M)}\otimes \iota_{X, Y})\otimes \id_{L^2Z})u_{X\times Y, Z}(\id_{L^2(M)}\otimes \iota^\ast_{X, Y}\otimes \id_{L^2Z}) \\
   \nonumber &\quad= \int_{X\times Y\times Z} u_{g_X(x),g_Y(y)} u_{g_X(x)g_Y(y), g_Z(z)} \otimes dE_X(x)\otimes dE_Y(y)\otimes dE_Z(z)\\
   \nonumber &\quad = \int_{X\times Y\times Z} \omega(g_X(x), g_Y(y), g_Z(z)) \alpha_{g_X(x)}(u_{g_Y(y),g_Z(z)})u_{g_X(x),g_Y(y)g_Z(z)} \\
 \nonumber   &\qquad \qquad \otimes dE_X(x)\otimes dE_Y(y)\otimes dE_Z(z)\\
    &\quad=(\id_{L^2(M)} \otimes \omega_{X,Y,Z}) \sigma_X(u_{Y, Z}) u_{X,Y\times Z}(\id_{L^2(M)}\otimes\id_{L^2X}\otimes \iota^\ast_{Y,Z})\,,
\end{align}
where
\begin{align}
    &\omega_{X,Y,Z} = \int\limits_{X\times Y\times Z} \omega(g_X(x), g_Y(y), g_Z(z))\,dE_X(x)\otimes dE_Y(y) \otimes dE_Z(z)\\
   \nonumber &\in \B(L^2(X)\otimes L^2(Y)\otimes L^2(Z)) \,.
\end{align}
\label{sec:Pentagon Diagram}
\appendix
\section{Pentagon Diagram}
In order to check that our functor $F$ is a unitary tensor functor, we first need an intermediate technical proposition.
\begin{prop}\label{tecintwin}
If $\rho$, $\sigma \in \End(M)$ and $\Hil$ is a separable Hilbert space such that $u \in \Hom(\rho, \sigma)$ and $U \in \U_{-M}(L^2M_M \otimes \Hil, L^2M_M)$, then
\begin{align*}
u {}^{\rho}U={}^{\sigma}U (u \otimes 1_{\B(\Hil)})\,.
\end{align*}
\end{prop}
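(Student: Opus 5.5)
The identity $u\,{}^{\rho}U={}^{\sigma}U\,(u\otimes 1_{\B(\Hil)})$ is linear and weak${}^{\ast}$-continuous in $U$. So the plan is to prove it first on simple tensors $m\otimes t$ and then extend by density. Here $u\in M$ satisfies $u\rho(m)=\sigma(m)u$ for all $m\in M$. As in Section~\ref{sec:Cuntz functor}, we view $U\in\Hom_{-M}(L^2M_M\otimes\Hil,L^2M_M)$ through $r_{L^2M_M}^{\ast}U\in M\otimes\B(\Hil,\CC)$, and use Equation~\eqref{specialcase2}:
\begin{align*}
{}^{\rho}U=r_{L^2M_M}\,{}^{\rho}(r_{L^2M_M}^{\ast}U).
\end{align*}
Under the identification of $L^2M_M\otimes\CC$ with $L^2M_M$, the left factor $u$ acts as $u\otimes 1_{\CC}$, and $u\otimes 1_{\B(\Hil)}$ is the corresponding element of $M\otimes\B(\Hil)$. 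The statement therefore reduces to
\begin{align*}
(u\otimes 1_{\CC})\,(\rho\otimes\id)(x)=(\sigma\otimes\id)(x)\,(u\otimes 1_{\Hil})
\end{align*}
for every $x\in M\otimes\B(\Hil,\CC)$.

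First I check this on a simple tensor $x=m\otimes t$ with $m\in M$ and $t\in\B(\Hil,\CC)$. The left side is $u\rho(m)\otimes t$ and the right side is $\sigma(m)u\otimes t$. These agree by the intertwining relation $u\rho(m)=\sigma(m)u$. By linearity the identity then holds on the algebraic tensor product $M\odot\B(\Hil,\CC)$.

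Next I pass to the weak${}^{\ast}$-closure, which the paper identifies with $\Hom_{-M}(L^2M_M\otimes\Hil,L^2M_M\otimes\CC)$. For this I need both sides to be weak${}^{\ast}$-continuous in $x$.
\begin{itemize}
\item Left and right multiplication by the fixed operators $u\otimes 1_{\CC}$ and $u\otimes 1_{\Hil}$ are weak${}^{\ast}$-continuous.
\item The maps ${}^{\rho}$ and ${}^{\sigma}$ are normal. They are defined in \eqref{endofn} by compressing $\rho\otimes\id_{\B(\Hil\oplus\CC)}$ with the isometries $\tilde V$. Since $\rho$ is normal (every endomorphism of a factor acting here is), $\rho\otimes\id$ is normal, and compressions by fixed isometries preserve normality.
\end{itemize}
Hence the set of $x$ satisfying the identity is weak${}^{\ast}$-closed. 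It contains a weak${}^{\ast}$-dense subspace, so it is everything. Applying this to $x=r^{\ast}U$ and multiplying on the left by $r$ gives the claim.

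The main obstacle is this density/normality step, in particular handling the off-diagonal block $\B(\Hil,\CC)$ correctly. I would justify it via Roberts' $2\times2$ embedding: one can apply the whole identity inside $\End_{-M}(L^2M_M\otimes(\Hil\oplus\CC))=M\otimes\B(\Hil\oplus\CC)$, using the elements
\begin{align*}
\tilde u_\rho=u\otimes 1,\qquad \tilde V_{\CC}\,x\,\tilde V_{\Hil}^{\ast}.
\end{align*}
There $u\otimes1$ intertwines $\rho\otimes\id$ with $\sigma\otimes\id$, again by checking on simple tensors and using normality. Compressing back with $\tilde V_{\CC}^{\ast}$ and $\tilde V_{\Hil}$, which commute with $u\otimes 1$, yields the statement. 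Unitarity of $U$ is not actually needed; the identity holds for any $U$ in the Hom space.
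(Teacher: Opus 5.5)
Your proposal is correct and follows the paper's proof. Both use ${}^{\rho}U=r_{L^2M_M}{}^{\rho}(r_{L^2M_M}^{\ast}U)$ to reduce to simple tensors $m\otimes t$, where the identity is just $u\rho(m)=\sigma(m)u$, and then extend by linearity and normality of ${}^{\rho}$ and ${}^{\sigma}$; the differences are cosmetic: the paper checks the simple-tensor case on vectors $\xi\otimes\eta$ and uses density, while you check it at the operator level, spell out the weak${}^{\ast}$-continuity, and rightly note that unitarity of $U$ is never used.
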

\begin{proof}
Recall that ${}^{\rho}U=r_{L^2M_M}{}^{\rho}(r_{L^2M_M}^{\ast}U)$ and $r_{L^2M_M}^{\ast} U \in \U_{-M}(L^2M_M \otimes \Hil, L^2M_M \otimes \CC)$. By normality and linearity, it is enough to check the equation for $r_{L^2M_M}^{\ast} U=m \otimes t$ with $m \in M$ and $t \in \B(\Hil, \CC)$.
Note that for $\xi \in L^2M_M, \eta \in \Hil$, we have
\begin{align*}
u r_{L^2M_M} {}^{\rho} (m \otimes t)(\xi \otimes \eta)= u r_{L^2M_M}(\rho(m)\xi \otimes t\eta)=u (t \eta) (\rho(m) \xi)\qquad 
\end{align*}
and
\begin{align*}
r_{L^2M_M}{}^{\sigma}(m \otimes t) (u \otimes 1)(\xi \otimes \eta)&=r_{L^2M_M}(\sigma(m) u \otimes t)(\xi \otimes \eta)\\
&=r_{L^2M_M}(u \rho(m) \xi \otimes t \eta)=u (\rho(m) \xi) (t \eta)\,.
\end{align*}
Since $L^2M_M \odot \Hil$ is dense in $L^2M_M \otimes \Hil$, we get
\begin{align}\label{Eqnintwn2}
u r_{L^2M_M} {}^{\rho} (m \otimes t)=r_{L^2M_M}{}^{\sigma}(m \otimes t) (u \otimes 1_{\B(\Hil)})\,.
\end{align}
Hence the result follows by linearly extending Equation~\eqref{Eqnintwn2} and using the normality of the functor $\rho \mapsto {}^{\rho}$.
\end{proof}

Recall the formula given in Proposition~\ref{comp}: namely, given $\Hil$ a separable Hilbert space, any morphism $\sigma \colon M \to M \otimes \B(\Hil)$, and a unitary $U \in \Hom_{-M}(L^2M_M \otimes \Hil, L^2M_M)$,
then $\rho=\Ad U \circ \sigma \in \End(M)$.
Then for any $\tau \in \End(M)$, we have
\begin{align}\label{comeq}
\tau  \rho=\Ad {}^{\tau}U (\tau \otimes \id_{\B(\Hil)}) \sigma\,.
\end{align}
Using Equation~\eqref{comeq} on the endomorphisms $\rho_X$, $\rho_Y$ and $\rho_Z$ for $X, Y, Z \in \Rep_{\mathrm{concrete}}(C(G))$, we get
\begin{align*}
    (\rho_X \rho_Y) \rho_Z&=\Ad {}^{\rho_X \rho_Y}U_Z  (\rho_X \rho_Y \otimes \id_{\B(L^{2}(Z))}) \sigma_Z\\
    &=\Ad {}^{\rho_X \rho_Y}U_Z  ((\Ad {}^{\rho_X} U_Y  (\rho_X \otimes \id_{\B(L^{2}(Y))})  \sigma_Y) \otimes \id_{\B(L^{2}(Z))}) \sigma_Z\\
    &=\Ad {}^{\rho_X \rho_Y}U_Z ({}^{\rho_X} U_Y \otimes \id_{\B(L^{2}(Z))}) ((\rho_X \otimes \id_{\B(L^{2}(Y))}) \otimes \id_{\B(L^{2}(Z))}) (\sigma_Y \otimes \id_{\B(L^{2}(Z))})  \sigma_Z\\
    &=\Ad {}^{\rho_X \rho_Y}U_Z ({}^{\rho_X} U_Y \otimes \id_{\B(L^{2}(Z))}) (U_X \otimes \id_{L^{2}(Y)} \otimes \id_{L^{2}(Z)})\\
    &\qquad \qquad \qquad \qquad(\sigma_X \otimes \id_{\B(L^{2}(Y))} \otimes \id_{\B(L^{2}(Z))}) (\sigma_Y \otimes \id_{\B(L^{2}(Z))}) \sigma_Z\,.
\end{align*}
Therefore,
\begin{align*}
&{}^{\rho_X \rho_Y}U_Z ({}^{\rho_X} U_Y \otimes \id_{\B(L^{2}(Z))}) (U_X \otimes \id_{L^{2}(Y)} \otimes \id_{L^{2}(Z)}) \text{ is in} \\
& \Hom((\sigma_X \otimes \id_{\B(L^{2}(Y))} \otimes \id_{\B(L^{2}(Z))}) (\sigma_Y \otimes \id_{\B(L^{2}(Z))}) \sigma_Z, \rho_X \rho_Y \rho_Z)\,.
\end{align*}
By abuse of notation, we write $1_X=\id_{L^2(X)}$ for $X \in \Rep_{\mathrm{concrete}}(C(G))$ and $\id_X=\id_{\B(L^2(X))}$.

We recall the tensorator given as
 \begin{align}\label{tensgg}
 W_{X,Y}=U_{X \times Y}(\id_{L^2(M)} \otimes \iota_{X,Y}^{\ast})u_{X,Y}^{\ast}(U_X^{\ast} \otimes \id_{L^{2}(Y)})({}^{\rho_X}U_Y)^{\ast}\,.
 \end{align}
We want to show the following diagram commutes:

\begin{tikzcd}
\rho_X \rho_Y \rho_Z \arrow[dd, "{W_{X, Y}}"] &  &  &  &  & ((\sigma_X \otimes 1_Y) \otimes 1_Z)(\sigma_Y \otimes 1_Z)\sigma_Z \arrow[lllll, "{}^{\rho_X\rho_Y}U_Z({}^{\rho_X}U_Y \otimes 1_Z)(U_X \otimes 1_Y \otimes 1_Z)"] \arrow[dd, "{(\id_{L^2(M)} \otimes \iota_{X,Y}^{\ast} \otimes 1_Z) (u_{X,Y}^{\ast} \otimes 1_Z)}"] \\
                                              &  &  &  &  &                                                                                                                                                                                                              \\
\rho_{X \times Y}\rho_Z                       &  &  &  &  & (\sigma_{X \times Y} \otimes 1_Z)\sigma_Z \arrow[lllll, "({}^{\rho_{X \times Y}}U_Z)(U_{X \times Y} \otimes 1_Z)"]\,.
\end{tikzcd}

Note that
\begin{align}
    W_{X,Y}{}^{\rho_X \rho_Y} U_Z={}^{\rho_{X \times Y}}U_Z(W_{X,Y} \otimes 1_Z)
\end{align}
by Proposition~\ref{tecintwin}.
Thus, we get
\begin{align*}
    W_{X,Y}{}^{\rho_X \rho_Y}U_Z({}^{\rho_X}U_Y \otimes 1_Z)(U_X \otimes 1_Y \otimes 1_Z)&={}^{\rho_{X \times Y}}U_Z(W_{X,Y} \otimes 1_Z)({}^{\rho_X}U_Y \otimes 1_Z)(U_X \otimes 1_Y \otimes 1_Z)\\
    &={}^{\rho_{X \times Y}}U_Z(U_{X \times Y} \otimes 1_Z)(\id_{L^2(M)} \otimes \iota_{X,Y}^{\ast} \otimes 1_Z)\\
    & \qquad \qquad \quad (u_{X,Y}^{\ast} \otimes 1_Z)(U_X^{\ast} \otimes 1_Y \otimes 1_Z)\\
    & \qquad \qquad ({}^{\rho_X}U_Y^{\ast} \otimes 1_Z) ({}^{\rho_X}U_Y \otimes 1_Z)(U_X \otimes 1_Y \otimes 1_Z)\\
    &= {}^{\rho_{X \times Y}} U_Z(U_{X \times Y} \otimes 1_Z)(\id_{L^2(M)} \otimes \iota_{X,Y}^{\ast} \otimes 1_Z) (u_{X,Y}^{\ast} \otimes 1_Z)\\
    &={}^{\rho_{X \times Y}}U_Z(U_{X \times Y}\otimes 1_Z) (\id_{L^2(M)} \otimes \iota_{X,Y}^{\ast} \otimes 1_Z) (u_{X,Y}^{\ast} \otimes 1_Z)\,.
\end{align*}
Therefore the above diagram commutes.

Now we show that the following diagram commutes:
$$
\begin{tikzcd}
(\rho_X \otimes \mathrm{id}_{Y \times Z})\sigma_{Y \times Z} \arrow[dd, "{}^{\rho_X}U_{Y \times Z}"] &  &  &  & (\rho_X \otimes \mathrm{id}_Y \otimes \mathrm{id}_Z)(\sigma_Y \otimes \mathrm{id}_Z)\sigma_Z \arrow[llll, "{{}^{\rho_X}(\id_{L^2(M)} \otimes \iota_{Y,Z}^{\ast})u_{Y,Z}^{\ast}}"] \arrow[dd, "{}^{\rho_X \rho_Y}U_Z({}^{\rho_X}U_Y \otimes 1_Z)"] \\
                                                                                                        &  &  &  &                                                                                                                                                                                                                    \\
\rho_X \rho_{Y \times Z}                                                                                &  &  &  & \rho_X \rho_Y \rho_Z \arrow[llll, "{\rho_X(W_{Y,Z})}"]                                                                                                                                                          \,.
\end{tikzcd}
$$

Notice that
\begin{align*}
    {}^{\rho_X}u_{Y,Z}^{\ast}=\int \rho_X(u_{g_Y(y), g_Z(z)}^{\ast}) \otimes dE_{Y}(y) \otimes dE_{Z}(z)\,.
\end{align*}
Using the functor $\End(M) \ni \rho \mapsto {}^{\rho}(\cdot)$ on $\Amp(M)$, we get
\begin{align*}
    {}^{\rho_X}U_{Y \times Z}\,{}^{\rho_X}u_{Y,Z}^{\ast}= {}^{\rho_X}(U_{Y \times Z} u_{Y,Z}^{\ast})
\end{align*}
and
\begin{align*}
    \rho_X(W_{Y,Z}){}^{\rho_X \rho_Y}U_Z({}^{\rho_X}U_Y \otimes 1_Z)= \rho_X(W_{Y,Z}){}^{\rho_X}({}^{\rho_Y} U_Z)({}^{\rho_X}U_Y \otimes 1_Z)\,.
\end{align*}
Using the fact that ${}^{\rho}(\cdot)$ is an endofunctor on $\mathrm{Amp}(M)$, we also get
\begin{align*}
   ({}^{\rho_X} U_Y^{\ast} \otimes 1_Z)= {}^{\rho_X}(U_Y^{\ast} \otimes 1_Z)\,.
\end{align*}
Combining these and using the definition of $W_{X,Y}$, we observe
\begin{align*}
    &{}^{\rho_X}(U_{Y \times Z} (\id_{L^2(M)} \otimes \iota_{Y,Z}^{\ast})u_{Y,Z}^{\ast})({}^{\rho_X}U_Y \otimes 1_Z)^{\ast}({}^{\rho_X}({}^{\rho_Y} U_Z))^\ast\\
    &={}^{\rho_X}(U_{Y \times Z} (\id_{L^2(M)} \otimes \iota_{Y,Z}^{\ast}) u_{Y,Z}^{\ast}(U_Y^{\ast} \otimes 1_Z) {}^{\rho_Y}U_Z^{\ast})\\
&={}^{\rho_X}W_{Y,Z}\\&=\rho_X(W_{Y,Z})\,.
\end{align*}
Hence, we have
\begin{align*}
{}^{\rho_X}(U_{Y \times Z}(\id_{L^2(M)} \otimes \iota_{Y,Z}^{\ast})u_{Y,Z}^{\ast})=\rho_X(W_{Y,Z})\,{}^{\rho_X}({}^{\rho_Y} U_Z)({}^{\rho_X}U_Y \otimes 1_Z)\,.
\end{align*}
Thus, the diagram commutes.
Define
\begin{align*}
{}^{\sigma_X}(u_{Y,Z})=\int \sigma_X(u_{g_Y(y), g_Z(z)}) \otimes dE_Y(y) \otimes dE_Z(z)\,,
\end{align*}
and by definition of the endofunctor we have
\begin{align*}
{}^{\rho_X}(u_{Y,Z})=\int \rho_X(u_{g_Y(y), g_Z(z)}) \otimes dE_Y(y) \otimes dE_Z(z)\,.
\end{align*}
Since $\rho_X=\Ad U_X \circ  \sigma_X$, by linearity and continuity of the integral, we observe that
\begin{align*}
\Ad (U_X \otimes 1_Y \otimes 1_Z) {}^{\sigma_X}(u_{Y,Z}) = {}^{\rho_X}(u_{Y,Z})\,,
\end{align*}
and the same holds for
\begin{align*}
\Ad (U_X \otimes 1_Y \otimes 1_Z) {}^{\sigma_X}(u_{Y,Z}^{\ast}) = {}^{\rho_X}(u_{Y,Z}^{\ast})\,.
\end{align*}
Note that $\alpha_{g} \alpha_h=\Ad u_{g,h} \alpha_{gh}$ for all $g, h \in G$.
Note that
\begin{align*}
&(\sigma_X \otimes \id_Y \otimes \id_Z)(\sigma_Y \otimes \id_Z)\sigma_Z(\cdot)\\
&=(\sigma_X \otimes \id_Y \otimes \id_Z)(\sigma_Y \otimes \id_Z) \int_Z \alpha_{g_Z(z)}(\cdot) \otimes dE_Z(z)\\
&=(\sigma_X \otimes \id_Y \otimes \id_Z) \int_{Y \times Z} \alpha_{g_Y(y)} \alpha_{g_Z(z)} (\cdot) \otimes dE_{Y}(y) \otimes dE_Z(z)\\
&=\int_{X \times Y \times Z} \alpha_{g_X(x)}(\alpha_{g_Y(y)} \alpha_{g_Z(z)}(\cdot)) \otimes dE_X(x) \otimes dE_Y(y) \otimes dE_Z(z)\\
&=\int_{X \times Y \times Z} \alpha_{g_X(x)}(\Ad u_{g_Y(y), g_Z(z)} \alpha_{g_Y(y)g_Z(z)}(\cdot))\otimes dE_X(x) \otimes dE_Y(y) \otimes dE_Z(z)\\
&=\int \Ad (\alpha_{g_X(x)}(u_{g_Y(y), g_Z(z)})) \alpha_{g_X(x)}(\alpha_{g_Y(y)g_Z(z)}(\cdot)) \otimes dE_X(x) \otimes dE_Y(y) \otimes dE_Z(z)\\
&=\Ad ({}^{\sigma_X}(u_{Y,Z}) (\id_{L^2(M)} \otimes 1_X \otimes \iota_{Y,Z})) \int \alpha_{g_X(x)}(\alpha_{g_{Y \times Z}(y,z)}(\cdot)) \otimes dE_X \otimes dE_{Y \times Z}\\
&=\Ad({}^{\sigma_X}(u_{Y,Z})(\id_{L^2(M)} \otimes 1_X \otimes \iota_{Y,Z}))(\sigma_X \otimes \id_{Y \times Z})\sigma_{Y \times Z}\,.
\end{align*}
Thus we have
\begin{align*}
(\id_{L^2(M)} \otimes 1_X \otimes \iota_{Y,Z}^{\ast}) {}^{\sigma_X}(u_{Y,Z}^{\ast}) \in \Hom((\sigma_X \otimes \id_Y \otimes \id_Z)(\sigma_Y \otimes \id_Z)\sigma_Z, (\sigma_X \otimes \id_{Y \times Z})\sigma_{Y \times Z})\,.
\end{align*}
Note that
\begin{align}\label{imrq}
(U_X^{\ast} \otimes 1_{Y \times Z})(\id_{L^2(M)} \otimes \iota_{Y,Z}^{\ast})=(\id_{L^2(M)} \otimes 1_X \otimes \iota_{Y,Z}^{\ast})(U_X^{\ast} \otimes 1_Y \otimes 1_Z)\,. 
\end{align}
Using the fact that $\rho_X=\Ad U_X \circ \sigma_X$ and Equation~\eqref{imrq}, we have
\begin{align*}
&(U_X^{\ast} \otimes 1_{Y \times Z})(\id_{L^2(M)} \otimes \iota_{Y,Z}^{\ast}){}^{\rho_X}(u_{Y,Z}^{\ast})=(U_X^{\ast} \otimes 1_{Y \times Z})(\id_{L^2(M)} \otimes \iota_{Y,Z}^{\ast})\\
& \qquad  \qquad \qquad (U_X \otimes 1_Y \otimes 1_Z) {}^{\sigma_X}(u_{Y,Z}^{\ast})(U_X^{\ast} \otimes 1_Y \otimes 1_Z)\\
&=(\id_{L^2(M)} \otimes 1_X \otimes \iota_{Y,Z}^{\ast}){}^{\sigma_X}(u_{Y,Z}^{\ast})(U_X^{\ast} \otimes 1_Y \otimes 1_Z)\,.
\end{align*}
Hence the diagram:
$$
\begin{tikzcd}
(\sigma_X \otimes \mathrm{id}_{Y \times Z})\sigma_{Y \times Z} \arrow[dd, "U_X \otimes 1_{Y \times Z}"] &  &  &  & (\sigma_X \otimes \mathrm{id}_Y \otimes \mathrm{id}_Z)(\sigma_Y \otimes \mathrm{id}_Z)\sigma_Z \arrow[llll, "{(\id_{L^2(M)} \otimes 1_X \otimes \iota_{Y,Z}^{\ast}){}^{\sigma_X}u_{Y,Z}^{\ast}}"] \arrow[dd, "U_X \otimes 1_Y \otimes 1_Z"] \\
                                                                                                        &  &  &  &                                                                                                                                                                                        \\
(\rho_X \otimes \mathrm{id}_{Y \times Z})\sigma_{Y \times Z}                                            &  &  &  & (\rho_X \otimes \mathrm{id}_Y \otimes \mathrm{id}_Z)(\sigma_Y \otimes \mathrm{id}_Z)\sigma_Z \arrow[llll, "{(\id_{L^2(M)} \otimes \iota_{Y,Z}^{\ast}){}^{\rho_X}u_{Y,Z}^{\ast}}"]
\end{tikzcd}
$$
commutes.
Combining the results of each diagram is commutative we get
$$
\begin{tikzcd}
(\sigma_X \otimes \mathrm{id}_{Y \times Z})\sigma_{Y \times Z}
    \arrow[dd, "{}^{\rho_X}U_{Y \times Z} (U_X \otimes 1_{Y \times Z})"'] 
&  &  &  & 
(\sigma_X \otimes \mathrm{id}_Y \otimes \mathrm{id}_Z)
(\sigma_Y \otimes \mathrm{id}_Z)\sigma_Z
    \arrow[llll, "(\id_{L^2(M)} \otimes 1_X \otimes \iota_{Y,Z}^{\ast}){}^{\sigma_X}u_{Y,Z}^{\ast}"] 
    \arrow[dd, "{}^{\rho_X \rho_Y}U_Z({}^{\rho_X}U_Y \otimes 1_Z)(U_X \otimes 1_Y \otimes 1_Z)"] 
\\
& & & & \\
\rho_X \rho_{Y \times Z} 
&  &  &  & 
\rho_X \rho_Y \rho_Z 
    \arrow[llll, "\rho_X(W_{Y,Z})"'] 
\end{tikzcd}
$$
is commutative.

Recall $\omega_{X,Y,Z} \in \B(L^2(X) \otimes L^2(Y) \otimes L^2(Z))$.

Then we get
\begin{align*}
&\tilde{\omega}_{X,Y,Z}^{\ast}:=\id_{L^2(M)} \otimes (\iota_{X, Y \times Z}^{\ast}(1_X \otimes \iota_{Y,Z}^{\ast})\omega_{X,Y,Z}^{\ast}\\
& \qquad \qquad \qquad(\iota_{X,Y} \otimes 1_Z)\iota_{X \times Y, Z}) \quad \text{in}\\
& \Hom(L^2(M) \otimes L^2((X \times Y) \times Z), L^2(M) \otimes L^2(X \times (Y \times Z)))\,.
\end{align*}
By definition of our functor, we get
\begin{align*}
F(a_{X,Y,Z})=U_{X \times (Y \times Z)} \tilde{\omega}_{X,Y,Z}^{\ast} U_{(X \times Y) \times Z}^{\ast}\,.
\end{align*}
Therefore
$$
\begin{tikzcd}
\sigma_{(X \times Y) \times Z} \arrow[rrr, "{\tilde{\omega}_{X,Y,Z}^{\ast}}"] \arrow[dd, "U_{(X \times Y) \times Z}"] &  &  & \sigma_{X \times (Y \times Z)} \arrow[dd, "U_{X \times (Y \times Z)}"] \\
                                                                                                      &  &  &                                                                        \\
\rho_{(X \times Y) \times Z} \arrow[rrr, "{F(a_{X,Y,Z})}"]                                            &  &  & \rho_{X \times (Y \times Z)}                                          
\end{tikzcd}
$$
is commutative.
Note that because of the $G$-kernel relations for the sectors the following diagram also commutes:
$$
\begin{tikzcd}
((\sigma_X \otimes \mathrm{id}_Y) \otimes \mathrm{id}_Z)(\sigma_Y \otimes \mathrm{id}_Z)\sigma_Z \arrow[rrrr, "\mathrm{id}", two heads] \arrow[d, "{(\id_{L^2(M)} \otimes \iota_{X,Y}^{\ast} \otimes 1_Z)(u_{X,Y}^{\ast} \otimes 1_Z)}"] &  &  &  & ((\sigma_X \otimes \mathrm{id}_Y) \otimes \mathrm{id}_Z)(\sigma_Y \otimes \mathrm{id}_Z)\sigma_Z \arrow[d, "{(\id_{L^2(M)} \otimes \id_{L^2(X)} \otimes \iota_{Y,Z}^{\ast}){}^{\sigma_X}u_{Y,Z}^{\ast}}" swap] \\
(\sigma_{X \times Y} \otimes \mathrm{id}_Z)\sigma_Z \arrow[d, "{(\id_{L^2(M)} \otimes \iota_{X \times Y, Z}^{\ast})u_{X \times Y, Z}^{\ast}}"]                                                                           &  &  &  & (\sigma_X \otimes \mathrm{id}_{Y \times Z})\sigma_{Y \times Z} \arrow[d, "{(\id_{L^2(M)} \otimes \id_{L^2(X)} \otimes \iota_{Y,Z}^{\ast})u_{X, Y \times Z}^{\ast}}" swap]                                   \\
\sigma_{(X \times Y) \times Z}   \arrow[rrrr, "{\tilde{\omega}_{X,Y,Z}^{\ast}}"]                                                                                                 &  &  &  & \sigma_{X \times (Y \times Z)}                                                                                                   \,.  
\end{tikzcd}
$$

Here \begin{align*}
\tilde{\omega}_{X,Y,Z}^{\ast}&=(\id_{L^2(M)} \otimes \iota_{X, Y \times Z}^{\ast})(\id_{L^2(M)} \otimes 1_X \otimes \iota_{Y,Z}^{\ast})\\&\qquad \qquad(\id_{L^2(M)} \otimes \omega^{\ast}_{X,Y,Z})(\id_{L^2(M)} \otimes \iota_{X,Y} \otimes 1_Z)(\id_{L^2(M)} \otimes \iota_{X \times Y, Z})\,.
\end{align*}
Note that
\begin{align*}
    \omega_{X,Y,Z}^{\ast}=\int_{X \times Y \times Z}\overline{\omega(g_X(x),g_Y(y),g_Z(z))} \otimes dE_X(x) \otimes dE_Y(y) \otimes dE_Z(z)\,.
\end{align*}
Note also that
$$
\begin{tikzcd}
\rho_{X \times Y} \rho_Z \arrow[dd, "{W_{X \times Y, Z}}"] &  &  &  & (\sigma_{X \times Y} \otimes \mathrm{id}_Z)\sigma_Z \arrow[llll, "{}^{\rho_{X \times Y}}U_Z(U_{X \times Y} \otimes 1_Z)"] \arrow[dd, "{(\id_{L^2(M)} \otimes \iota_{X \times Y, Z}^{\ast})u_{X \times Y,Z}^{\ast}}"] \\
                                                             &  &  &  &                                                                                                                                                                   \\
\rho_{(X \times Y) \times Z}                                 &  &  &  & \sigma_{(X \times Y) \times Z} \arrow[llll, "U_{(X \times Y) \times Z}"]                                                                                         
\end{tikzcd}
$$
is commutative by definition of $W_{X \times Y,Z}$.
Moreover, also note that following diagram commutes:
$$
\begin{tikzcd}
(\sigma_X \otimes \id_{Y \times Z})\sigma_{Y \times Z} \arrow[rrr, "{}^{\rho_X}U_{Y \times Z}(U_X \otimes 1_{Y \times Z})"] \arrow[dd, "{(\id_{L^2(M)} \otimes \iota_{X, Y \times Z}^{\ast})u_{X, Y \times Z}^{\ast}}"] &  &  & \rho_X \rho_{Y \times Z} \arrow[dd, "{W_{X, Y \times Z}}"] \\
                                                                                                                                                                     &  &  &                                                              \\
\sigma_{X \times (Y \times Z)} \arrow[rrr, "U_{X \times (Y \times Z)}"]                                                                                              &  &  & \rho_{X \times (Y \times Z)}                                
\end{tikzcd}
$$
by definition of $W_{X, Y\times Z}$.

Consequently, the full pentagon diagram commutes:
\begin{center}
\resizebox{\textwidth}{!}{
\begin{tikzcd}[ampersand replacement=\&, column sep=small, row sep=4em]
\rho_X \rho_Y \rho_Z 
    \arrow[rrrr, two heads, "\id"] 
    \arrow[dd, "W_{X,Y}"'] 
\& \& \& \& 
\rho_X \rho_Y \rho_Z 
    \arrow[dd, "\rho_X(W_{Y,Z})"'] 
\\
\& ((\sigma_X \otimes \id_Y) \otimes \id_Z)(\sigma_Y \otimes \id_Z)\sigma_Z 
    \arrow[rr, two heads, "\id"] 
    \arrow[lu, "b"] 
    \arrow[d, "(\id_{L^2(M)} \otimes \iota_{X,Y}^{\ast} \otimes 1_Z)(u_{X,Y}^{\ast} \otimes 1_Z)"'] 
\& \& 
    ((\sigma_X \otimes \id_Y) \otimes \id_Z)(\sigma_Y \otimes \id_Z)\sigma_Z 
    \arrow[ru, "b"]
    \arrow[d, "c'"] 
\& \\
\rho_{X \times Y}\rho_Z 
    \arrow[dd, "W_{X \times Y, Z}"'] 
  \& (\sigma_{X \times Y} \otimes \id_Z)\sigma_Z 
    \arrow[d, "(\id_{L^2(M)} \otimes \iota_{X \times Y, Z}^{\ast})u_{X \times Y,Z}^{\ast}"'] 
    \arrow[l, "{}^{\rho_{X\times Y}}U_{Z}(U_{X \times Y} \otimes 1_Z)"] 
\& \& 
    (\sigma_X \otimes \id_{Y \times Z})\sigma_{Y \times Z} 
    \arrow[d, "(\id_{L^2(M)} \otimes \iota_{X, Y \times Z}^{\ast})u_{X, Y \times Z}^{\ast}"] 
    \arrow[r, "{}^{\rho_X}U_{Y \times Z}(U_X \otimes 1_{Y \times Z})"] 
  \& \rho_X \rho_{Y \times Z} 
    \arrow[dd, "W_{X, Y \times Z}"'] 
\\
\& \sigma_{(X \times Y)\times Z} 
    \arrow[rr, "\tilde{\omega}_{X,Y,Z}^{\ast}"] 
    \arrow[ld, "U_{(X \times Y) \times Z}"'] 
\& \& 
    \sigma_{X \times (Y \times Z)} 
    \arrow[rd, "U_{X \times (Y \times Z)}"] 
\& \\
\rho_{(X \times Y) \times Z} 
    \arrow[rrrr, "F(a_{X,Y,Z})"] 
\& \& \& \& 
\rho_{X \times (Y \times Z)}
\end{tikzcd}
}
\end{center}

where $b={}^{\rho_X\rho_Y}U_Z({}^{\rho_X}U_Y \otimes 1_Z)(U_X \otimes 1_Y \otimes 1_Z)$, and $c'=(\id_{L^2(M)} \otimes \id_{L^2(X)} \otimes \iota_{Y,Z}^{\ast}){}^{\sigma_X}u_{Y,Z}^{\ast}$.

Therefore the required diagram commutes, namely:

$$
\begin{tikzcd}
\rho_X \rho_Y \rho_Z 
  \arrow[rrrr, "\id"] 
  \arrow[d, "W_{X,Y}"'] 
& & & & 
\rho_X \rho_Y \rho_Z 
  \arrow[d, "\rho_{X}(W_{Y,Z})"] 
\\
\rho_{X \times Y} \rho_Z 
  \arrow[d, "W_{X \times Y, Z}"'] 
& & & & 
\rho_X \rho_{Y \times Z} 
  \arrow[d, "W_{X, Y \times Z}"] 
\\
\rho_{(X \times Y) \times Z} 
  \arrow[rrrr, "F(a_{X,Y,Z})"'] 
& & & & 
\rho_{X \times (Y \times Z)}
\end{tikzcd}
$$

commutes.

\bibliographystyle{plain}

\begin{bibdiv}
\begin{biblist}

\bib{BiKaLoRe2014-2}{book}{
      author={Bischoff, Marcel},
      author={Kawahigashi, Yasuyuki},
      author={Longo, Roberto},
      author={Rehren, Karl-Henning},
       title={Tensor categories and endomorphisms of von {N}eumann
  algebras---with applications to quantum field theory},
      series={Springer Briefs in Mathematical Physics},
   publisher={Springer},
        date={2015},
      volume={3},
        ISBN={978-3-319-14300-2; 978-3-319-14301-9},
         url={http://dx.doi.org/10.1007/978-3-319-14301-9},
      review={\MR{3308880}},
}

\bib{DoHaRo1969II}{article}{
      author={Doplicher, Sergio},
      author={Haag, Rudolf},
      author={Roberts, John~E.},
       title={{Fields, observables and gauge transformations II}},
        date={1969},
        ISSN={0010-3616},
     journal={Comm. Math. Phys.},
      volume={15},
       pages={173–200},
         url={http://dx.doi.org/10.1007/BF01645674},
}

\bib{EtGeNiOs2015}{book}{
      author={Etingof, Pavel},
      author={Gelaki, Shlomo},
      author={Nikshych, Dmitri},
      author={Ostrik, Victor},
       title={Tensor categories},
      series={Mathematical Surveys and Monographs},
   publisher={American Mathematical Society, Providence, RI},
        date={2015},
      volume={205},
        ISBN={978-1-4704-2024-6},
         url={http://dx.doi.org/10.1090/surv/205},
      review={\MR{3242743}},
}

\bib{GhLiRo1985}{article}{
      author={Ghez, P.},
      author={Lima, R.},
      author={Roberts, J.~E.},
       title={{$W^\ast$}-categories},
        date={1985},
        ISSN={0030-8730},
     journal={Pacific J. Math.},
      volume={120},
      number={1},
       pages={79\ndash 109},
  url={http://projecteuclid.org.proxy.library.ohio.edu/euclid.pjm/1102703884},
      review={\MR{808930}},
}

\bib{Ha1975}{article}{
      author={Haagerup, Uffe},
       title={The standard form of von {N}eumann algebras},
        date={1975},
     journal={Math. Scand.},
      volume={37},
      number={2},
       pages={271\ndash 283},
}

\bib{HeNiPe2024}{misc}{
      author={Henriques, André},
      author={Nivedita},
      author={Penneys, David},
       title={Complete w*-categories},
        date={2024},
         url={https://arxiv.org/abs/2411.01678},
}

\bib{Iz2015}{article}{
      author={Izumi, Masaki},
       title={A {C}untz algebra approach to the classification of near-group
  categories},
        date={2015},
     journal={arXiv preprint arXiv:1512.04288},
}

\bib{Jo1980}{article}{
      author={Jones, Vaughan F.~R.},
       title={Actions of finite groups on the hyperfinite type {${\rm
  II}_{1}$}\ factor},
        date={1980},
        ISSN={0065-9266},
     journal={Mem. Amer. Math. Soc.},
      volume={28},
      number={237},
       pages={v+70},
         url={http://dx.doi.org/10.1090/memo/0237},
      review={\MR{587749}},
}

\bib{Ma2025}{misc}{
      author={Marín-Salvador, Adrià},
       title={Continuous tambara-yamagami tensor categories},
        date={2025},
         url={https://arxiv.org/abs/2503.14596},
}

\bib{Su1980-II}{article}{
      author={Sutherland, Colin~E.},
       title={Cohomology and extensions of von {N}eumann algebras. {II}},
        date={1980},
        ISSN={0034-5318},
     journal={Publ. Res. Inst. Math. Sci.},
      volume={16},
      number={1},
       pages={135\ndash 174},
         url={https://doi.org/10.2977/prims/1195187501},
      review={\MR{574031}},
}

\end{biblist}
\end{bibdiv}
\def\cprime{$'$}\newcommand{\noopsort}[1]{}

\end{document}